\newtheorem{theorem}{Theorem}[section]
\newtheorem{corollary}{Corollary}[section]
\newtheorem{remark}{Remark}[section]
\newtheorem{lemma}{Lemma}[section]
\title[A unified existence theory]{\bf A unified existence theory for
evolution equations and systems under nonlocal conditions}
\begin{document}

\maketitle

\noindent \centerline{\bf Tiziana Cardinali$^{\small a}$ - Radu Precup$^{\small b}$ - Paola Rubbioni$^{\small a,c}$
}

\vskip 1truecm
\noindent \centerline{\small $^{\small a}$ Department of Mathematics and Informatics, University of Perugia, Perugia, Italy}

\noindent \centerline{\small $^{\small b}$ Department of Mathematics, Babe\c{s}-Bolyai University, Cluj, Romania}

\noindent \begin{center}{\small $^{\small c}$ Corresponding author -
via L.Vanvitelli 1, 06125 Perugia (Italy)
- Phone: +390755855042}
\end{center}

\noindent \centerline{\small {\em E-mail addresses: tiziana@dmi.unipg.it - r.precup@math.ubbcluj.ro - rubbioni@dmi.unipg.it}}

\vskip 1truecm

\begin{abstract}
We investigate the effect of nonlocal conditions expressed by linear
continuous mappings over the hypotheses which guarantee the existence of
global mild solutions for functional-differential equations in a Banach
space. A progressive transition from the Volterra integral operator
associated to the Cauchy problem, to Fredholm type operators appears when
the support of the nonlocal condition increases from zero to the entire
interval of the problem. The results are extended to systems of equations in
a such way that the system nonlinearities behave independently as much as
possible and the support of the nonlocal condition may differ from one
variable to another.
\end{abstract}

\noindent \textbf{Keywords: }functional-differential equations in abstract
spaces; evolution equation; evolution system; nonlocal Cauchy problem; mild
solution; measure of noncompactness; Kamke function; fixed point; spectral
radius of a matrix.

\noindent \textbf{MSC 2010: } 34G20, 34K30, 45N05, 47J05, 47J35

\section{Introduction}

\renewcommand{\theequation}{\thesection
.\arabic{equation}}This paper deals with the Cauchy problem for
functional-differential evolution equations in a Banach space$\ X,\ $with a
nonlocal condition expressed by a linear mapping
\begin{equation}
\left\{
\begin{array}{l}
u^{\prime }\left( t\right) =A\left( t\right) u\left( t\right) +\Phi \left(
u\right) \left( t\right) ,\ \ \ \text{for\ a.a.\ }t\in \left[ 0,a\right] \\
u\left( 0\right) =F\left( u\right) .%
\end{array}%
\right.  \label{1}
\end{equation}%
Here $\left\{ A\left( t\right) \right\} _{t\in \left[ 0,a\right] }$ is a
family of densely defined linear operators (not necessarily bounded or
closed) in the Banach space $X$ generating an evolution operator, $\Phi $ is
a nonlinear mapping, and $F$ is linear.

Containing the general functional term $\Phi ,$ our equation is more general
than the most studied one given by%
\begin{equation*}
\Phi \left( u\right) \left( t\right) =g\left( t,u_{t}\right) ,
\end{equation*}%
where the function $u_{t}\left( s\right) =u\left( t+s\right) ,$ for $s\in
\lbrack -r,0],$ $r>0,$ $t\in \left[ 0,a\right] $ stands for the memory in
lots of models for processes with aftereffect (see, e.g. \cite{wu}). In
particular, it covers evolution equations which are perturbed by a
superposition operator $\Phi ,$
\begin{equation}
\Phi \left( u\right) \left( t\right) =f\left( t,u\left( t\right) \right) ,\
\ t\in \left[ 0,a\right]  \label{so}
\end{equation}%
associated to some function $f:\left[ 0,a\right] \times X\rightarrow X,$
integro-differential equations and equations with modified argument.

In the mathematical modeling of real processes from physics, chemistry or
biology, the nonlocal conditions can be seen as feedback controls by which
the "sum" of the states of the process along its evolution equals the
initial state. The mapping $F$ expressing the nonlocal condition can be
linear or nonlinear, of discrete or continuous type. For instance, as a
linear mapping, it can be given by a finite sum of multi-point form%
\begin{equation}
F\left( u\right) =\sum_{k=1}^{m}c_{k}u\left( t_{k}\right) ,  \label{mp}
\end{equation}%
where $0<t_{1}<t_{2}<...<t_{m}\leq a$ and $c_{k}$ are real numbers. More
general, it can be expressed in terms of a Stieltjes integral%
\begin{equation*}
F\left( u\right) =\int_{0}^{a}u\left( t\right) d\phi \left( t\right) .
\end{equation*}%
Nonlocal problems with multi-point conditions and more general with linear
and nonlinear nonlocal conditions were discussed in the literature by
various approaches. We refer the reader to the papers \cite{al}, \cite{bn}-%
\cite{cr}, \cite{f}, \cite{llx}, \cite{ll}, \cite{nt}, \cite{wi}, \cite{x}
and the references therein.

As it was first remarked in \cite{bp}, it is important to take into
consideration the \textit{support} of the nonlocal condition, that is the
minimal closed subinterval $\left[ 0,a_{F}\right] $ of $\left[ 0,a\right] $
with the property%
\begin{equation}
F\left( u\right) =F\left( v\right) \text{ whenever }u=v\text{ on }\left[
0,a_{F}\right] .  \label{supp}
\end{equation}%
This means that the mapping $F$ only depends on the restrictions of the
functions from $C\left( \left[ 0,a\right] ;X\right) ,$ to the subinterval $%
\left[ 0,a_{F}\right] .$ The case $a_{F}=0$ recovers the classical Cauchy
problem, while the case $a_{F}=a$ corresponds to a \textit{global} nonlocal
condition dissipated over the entire interval $\left[ 0,a\right] $ of the
problem. When $0<a_{F}<a,$ we say that the nonlocal condition is \textit{%
partial}. As we shall see, moving $a_{F}$ from $0$ to $a,$ we realize a
progressive transition from Volterra to Fredholm nature of the equivalent
integral equation.

The support problem is even more interesting in case of a system of
equations in $n$ unknown functions $u_{1},u_{2},...,u_{n},$ when a nonlocal
condition is expressed by a linear mapping $F=F\left(
u_{1},u_{2},...,u_{n}\right) .$ In this case, we may speak about the \textit{%
support of }$F$\textit{\ with respect to each of the variables}. The notion
is introduced in this paper for the first time, and together with the
vectorial method that is used, allows us to localize independently each
component $u_{i}$ of a solution $\left( u_{1},u_{2},...,u_{n}\right) .$

In addition, as an other original feature of our study, the localization of
a solution, and in case of systems, of each of the solution components, is
realized in a \textit{tube}, i.e. a set of the form%
\begin{equation*}
\left\{ \left( t,u\right) :\ t\in \left[ 0,a\right] ,\ u\in X,\ \left\vert
u\right\vert \leq R\left( t\right) \right\} ,
\end{equation*}%
of a time-depending radius $R\left( t\right) .$ In a physical
interpretation, this means that the variation of a quantity $u\left(
t\right) $ is allowed to be nonuniformly larger or smaller during the
evolution, as prescribed by function $R\left( t\right) .$

We finish this introductory part by some notations and basic results.
Throughout this paper, the norm of a Banach space $X$ is denoted by $%
\left\vert .\right\vert ,$ the open and closed balls of $X,$ of radius $R$
centered at the origin, are denoted by $B\left( 0,R\right) ,\ \overline{B}%
\left( 0,R\right) ,$ respectively; the symbol $\left\vert .\right\vert _{%
\mathcal{L}\left( X,Y\right) }$ is used for the norm of a linear continuous
mapping from $X$ to $Y,$ with the understanding that $\mathcal{L}(X,Y)$ is
the space of all bounded linear operators from $X$ to $Y.$ Also, the norm on
$L^{p}\left( b_{1},b_{2}\right) $ $\left( 1\leq p\leq \infty \right) $ is
denoted by $\left\vert .\right\vert _{L^{p}\left( b_{1},b_{2}\right) },$ and
the symbol $\left\vert .\right\vert _{L^{\infty }\left( b_{1},b_{2}\right) }$
is also used for the sup norm on $C\left[ b_{1},b_{2}\right]
:=C([b_{1},b_{2}];\mathbf{R}).$ The notation $L_{+}^{1}\left(
b_{1},b_{2}\right) $ stands for the set of all nonnegative functions in $%
L^{1}\left( b_{1},b_{2}\right) .$ The open and closed balls of $C\left( %
\left[ 0,a\right] ;X\right) $ of radius $R$ centered at the origin are
denoted by $B_{C}\left( 0,R\right) ,\ \overline{B}_{C}\left( 0,R\right) ,$
respectively.

We recall that an operator ~$T:\Delta \rightarrow \mathcal{L}(X,X)$, where $%
\Delta =\{(t,s):0\leq s\leq t\leq a\}$, is called an \emph{evolution operator%
} if $T(t,s):X\rightarrow X$ is a bounded linear operator for every $%
(t,s)\in \Delta ,$ and the following conditions are satisfied:

\begin{enumerate}
\item[(i)] $T(s,s)=I\ $(identity of $X),$ \quad $T(t,r)T(r,s)=T(t,s)$%
\thinspace\ for $0\leq s\leq r\leq t\leq a;$

\item[(ii)] $(t,s)\mapsto T(t,s)$\thinspace\ is strongly continuous on $%
\Delta .$
\end{enumerate}

\noindent Note that, since $T$ is strongly continuous on the compact set $%
\Delta ,$ there exists a constant $M>0$ such that
\begin{equation}
|T(t,s)|_{\mathcal{L}(X,X)}\leq M\,,\quad \text{for all\ \ }(t,s)\in \Delta .
\label{M}
\end{equation}

By $\alpha $ we shall denote the \emph{Kuratowski measure of noncompactness}
on a Banach space $X,$ i.e.%
\begin{equation*}
\alpha \left( D\right) =\inf \left\{ \varepsilon >0:D\text{ admits a finite
cover by sets of diameter }\leq \varepsilon \right\}
\end{equation*}%
for any bounded $D\subset X.$ The symbol $\alpha _{C}$ will stand for the
corresponding Kuratowski measure of noncompactness on $C\left( \left[
b_{1},b_{2}\right] ;X\right) .$ Recall (see \cite{a}, \cite{bg}, \cite{d},
\cite{koz}) that for an equicontinuous set $D\subset C\left( \left[
b_{1},b_{2}\right] ;X\right) $ with $D\left( t\right) $ bounded for each $%
t\in \left[ b_{1},b_{2}\right] ,$ one has%
\begin{equation}
\alpha _{C}\left( D\right) =\max_{t\in \left[ b_{1},b_{2}\right] }\alpha
\left( D\left( t\right) \right) .  \label{a1}
\end{equation}%
Also recall (see \cite{h}, \cite{op1}) that for a countable set $D\subset
L^{1}\left( b_{1},b_{2};X\right) $ with $\left\vert u\left( t\right)
\right\vert \leq \eta \left( t\right) $ for a.a. $t\in \left[ b_{1},b_{2}%
\right] $ and every $u\in D,$ where $\eta \in L_{+}^{1}\left(
b_{1},b_{2}\right) ,$ the function $t\mapsto \alpha \left( D\left( t\right)
\right) $ belongs to $L^{1}\left( b_{1},b_{2}\right) $ and
\begin{equation}
\alpha \left( \left\{ \int_{b_{1}}^{b_{2}}u\left( s\right) ds:u\in D\right\}
\right) \leq 2\int_{b_{1}}^{b_{2}}\alpha \left( D\left( s\right) \right) ds.
\label{h1}
\end{equation}%
The main tool of nonlinear functional analysis that we shall use is the
Leray-Schauder type continuation theorem of M\"{o}nch \cite{m} (see also
\cite{d}, \cite{op2}) involving a compactness condition which in particular
holds for condensing operators.

\begin{theorem}
\label{th 1.1}Let $U$ be an open subset of a Banach space $X,$ and let $N:%
\overline{U}\rightarrow X$ be continuous. Assume that for some $u_{0}\in U$
the following conditions are satisfied:

\emph{(a)} $N\left( u\right) -u_{0}\neq \lambda \left( u-u_{0}\right) $ on $%
\partial U$ for all $\lambda >1;$

\emph{(b)} if $C\subset \overline{U}$ is countable and $C\subset \overline{%
\text{\emph{conv}}}\left( \left\{ u_{0}\right\} \cup N\left( C\right)
\right) ,$ then $\overline{C}$ is compact.

Then $N$ has a fixed point in $\overline{U}.$
\end{theorem}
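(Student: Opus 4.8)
The plan is to reduce the statement to Schauder's fixed point theorem by manufacturing, out of the abstract compactness hypothesis (b), a genuine nonempty compact convex set that is invariant under a suitable continuous modification of $N$. If $N$ already has a fixed point on $\partial U$ we are done, so I assume throughout that $N(u)\neq u$ for every $u\in\partial U$. Rewriting $N(u)-u_0=\lambda(u-u_0)$ with $\lambda=1/\mu$, condition (a) says exactly that $u\neq\mu N(u)+(1-\mu)u_0$ for all $u\in\partial U$ and all $\mu\in(0,1)$; together with the no-boundary-fixed-point assumption this means that the set
\[
A=\{u\in\overline U:\ u=\mu N(u)+(1-\mu)u_0\ \text{for some}\ \mu\in[0,1]\}
\]
contains $u_0$ (take $\mu=0$), is closed (continuity of $N$ plus compactness of $[0,1]$), and is disjoint from $\partial U$.

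First I would remove the difficulty that $N$ need not map $\overline U$ into itself. Since $A$ and $\partial U$ are disjoint closed subsets of the (normal) space $\overline U$, Urysohn's lemma gives a continuous $\varphi:\overline U\to[0,1]$ with $\varphi\equiv1$ on $A$ and $\varphi\equiv0$ on $\partial U$. I extend $N$ to all of $X$ by
\[
J(u)=\begin{cases}\varphi(u)\,N(u)+(1-\varphi(u))\,u_0,& u\in\overline U,\\ u_0,& u\notin\overline U,\end{cases}
\]
which is continuous because the two definitions agree on $\partial U$, where $\varphi=0$. By construction each value $J(u)$ is a convex combination of $u_0$ and a point of $N(\overline U)$, so $J$ is a continuous self-map of $X$ whose range lies in $\overline{\mathrm{conv}}(\{u_0\}\cup N(\overline U))$.

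The core step is a self-map Mönch argument for $J$. I build a countable set $D_0$ by setting $D_1=\{u_0\}$ and letting $D_{n+1}$ be $D_n$ together with a countable dense subset of $\overline{\mathrm{conv}}(\{u_0\}\cup J(D_n))$; then $D_0=\bigcup_n D_n$ is countable. Writing $\widetilde C=\overline{\mathrm{conv}}(\{u_0\}\cup J(D_0))$, the density built into the construction yields $\widetilde C=\overline{D_0}$, and continuity of $J$ together with $J(D_0)\subseteq\widetilde C$ gives $J(\widetilde C)\subseteq\widetilde C$; moreover $D_0\subseteq\widetilde C=\overline{\mathrm{conv}}(\{u_0\}\cup J(D_0))$. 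To invoke (b) I must route the compactness condition through $N$: since every $J(u)$ is a convex combination of $u_0$ and $N(u)$ (or equals $u_0$ off $\overline U$), one has $\overline{\mathrm{conv}}(\{u_0\}\cup J(D_0))\subseteq\overline{\mathrm{conv}}(\{u_0\}\cup N(D_0'))$ with $D_0'=D_0\cap\overline U$, so the countable set $D_0'\subseteq\overline U$ satisfies $D_0'\subseteq\overline{\mathrm{conv}}(\{u_0\}\cup N(D_0'))$. Hypothesis (b) then forces $\overline{D_0'}$ to be compact, whence $N(D_0')$ is relatively compact and $\widetilde C$, being a closed subset of $\overline{\mathrm{conv}}(\{u_0\}\cup N(D_0'))$ (compact by Mazur's theorem), is compact. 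Thus $\widetilde C$ is a nonempty compact convex set with $J(\widetilde C)\subseteq\widetilde C$, and Schauder's theorem provides $u^\ast\in\widetilde C$ with $J(u^\ast)=u^\ast$.

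Finally I would upgrade $u^\ast$ to a fixed point of $N$ in $\overline U$. If $u^\ast\notin\overline U$ then $u^\ast=J(u^\ast)=u_0\in U$, a contradiction, so $u^\ast\in\overline U$; the identity $u^\ast=\varphi(u^\ast)N(u^\ast)+(1-\varphi(u^\ast))u_0$ shows $u^\ast\in A$, hence $\varphi(u^\ast)=1$ and $N(u^\ast)=u^\ast$. The step I expect to be the main obstacle is the compactness extraction in the previous paragraph: hypothesis (b) only speaks about countable sets, so the whole difficulty is to organise the iterated dense construction so that the invariant set $\widetilde C$ is recovered as the closure of a single countable set to which (b) applies, and to transfer that application from $J$ back to $N$. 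By comparison, the Urysohn modification and the concluding identification of $u^\ast$ are routine.
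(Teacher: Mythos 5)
The paper offers no proof of Theorem \ref{th 1.1}: it is recalled as a known tool (M\"{o}nch's continuation theorem), with references to \cite{m}, \cite{d}, \cite{op2}, and is then used as a black box. So there is no in-paper argument to compare against; what can be said is that your proof is correct and follows the classical route by which this result is established in those references: reduce the Leray--Schauder-type boundary condition to a self-map situation, and handle the self-map case by the iterated countable-set construction plus Mazur and Schauder. Concretely, your set $A$ is closed and misses $\partial U$ exactly because of (a) combined with the harmless assumption that $N$ has no fixed point on $\partial U$; the Urysohn function $\varphi$ and the modification $J=\varphi N+(1-\varphi)u_0$, extended by $u_0$ off $\overline U$, are the standard device; the chain $D_1=\{u_0\}$, $D_{n+1}=D_n\cup\bigl(\text{countable dense subset of }\overline{\mathrm{conv}}(\{u_0\}\cup J(D_n))\bigr)$ yields $\widetilde C:=\overline{\mathrm{conv}}(\{u_0\}\cup J(D_0))=\overline{D_0}$ and $J(\widetilde C)\subset\widetilde C$; routing through $D_0'=D_0\cap\overline U$ correctly converts the countable-set condition on $J$ into one on $N$, so hypothesis (b) applies to $D_0'$; compactness of $\overline{D_0'}$, continuity of $N$ and Mazur's theorem make $\widetilde C$ compact, Schauder gives $u^\ast=J(u^\ast)$, and $\varphi\equiv1$ on $A$ turns this into $u^\ast=N(u^\ast)\in\overline U$. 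Two steps are asserted rather than argued, but both are standard and fillable in a line each: the countable dense subsets exist because the closed convex hull of a countable set is separable (rational convex combinations of a countable set are dense in it), and the equality $\widetilde C=\overline{D_0}$ uses that the $D_n$ increase, so that $\mathrm{conv}(\{u_0\}\cup J(D_0))=\bigcup_n\mathrm{conv}(\{u_0\}\cup J(D_n))$ with each term contained in $\overline{D_{n+1}}$ by the density choice, while conversely $D_n\subset\widetilde C$ by induction. I find no genuine gap.
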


Finally, for the last part of the paper devoted to systems, we recall that
for a square matrix of nonnegative entries $H\in \mathcal{M}_{n\times
n}\left( \mathbf{R}_{+}\right) ,$ the \textit{spectral radius} $\rho \left(
H\right) $ is the maximum modulus of the eigenvalues, and that the following
statements are equivalent:

(i) $\ \rho \left( H\right) <1;$

(ii) $\ H^{k}\rightarrow 0$ (zero matrix) as $k\rightarrow \infty ;$

(iii) $\ I-H$ is nonsingular and the entries of $\left( I-H\right) ^{-1}$
are nonnegative ($I$ being the unit matrix of the same order).

Details can be found in \cite{v}.

\section{Existence and localization of solutions for evolution equations}

\setcounter{equation}{0}Compared to other papers on the existence of
solutions for local or nonlocal problems, our approach is to find solutions
in a `ball' of a time-depending radius. Hence we are looking for solutions
in the bounded closed subset of $C\left( \left[ 0,a\right] ;X\right) ,$%
\begin{equation*}
\overline{U}:=\left\{ u\in C\left( \left[ 0,a\right] ;X\right) :\left\vert
u\left( t\right) \right\vert \leq R\left( t\right) \text{ for all }t\in %
\left[ 0,a\right] \right\} ,
\end{equation*}%
where $R\in C\left[ 0,a\right] $ is a given function with $R\left( t\right)
>0$ for all $t\in \left[ 0,a\right] ,$ and
\begin{equation*}
U:=\left\{ u\in C\left( \left[ 0,a\right] ;X\right) :\left\vert u\left(
t\right) \right\vert <R\left( t\right) \text{ for all }t\in \left[ 0,a\right]
\right\} .
\end{equation*}%
In this section, the linear part of the equation of problem (\ref{1}) will
satisfy the following property (see, e.g. \cite{cr0}):

\begin{description}
\item[(A)] $\{A(t)\}_{t\in \lbrack 0,a]}$ is a family of linear not
necessarily bounded operators $(A(t):D(A)\subset X\rightarrow X,$ $t\in
\lbrack 0,a],$ $D(A)$ is a dense subset of $X$ not depending on $t)$
generating a continuous evolution operator $T:\Delta \rightarrow \mathcal{L}(X,X)$.
\end{description}

We shall assume that

\begin{description}
\item[(h1)] $\Phi :\overline{U}\rightarrow L^{1}(0,a;X)$ is continuous;

\item[(h2)] $F:$ $C\left( \left[ 0,a\right] ;X\right) \rightarrow X$ is a
linear continuous mapping such that the operator from $X$ to $X,$ $x\mapsto
x-F\left( T\left( .,0\right) x\right) $ has an inverse $B.$
\end{description}

Note that, by (h2) and the definition of the evolution operator, the
operator $B$ is linear and bounded, i.e. $B\in \mathcal{L}\left( X,X\right) $
(see \cite[Corollary 3.2.8]{dmp}).

\begin{remark}
\emph{A sufficient condition for (h2) to hold is that the norm of the
operator }$FT\left( .,0\right) $\emph{\ from }$X$\emph{\ to }$X$\emph{\ is
less than one. Indeed, in this case, }$FT\left( .,0\right) $\emph{\ is a
contractive mapping and consequently, the operator from }$X$\emph{\ to }$X,$%
\emph{\ }$x\mapsto x-F\left( T\left( .,0\right) x\right) $\emph{\ is
invertible. In the particular case, where }$F$\emph{\ is of discrete type,
given by (\ref{mp}), one has }$a_F=t_{m},$\emph{\ and the norm of the }$%
FT\left( .,0\right) $\emph{\ is less than one if}%
\begin{equation*}
M\sum_{k=1}^{m}\left\vert c_{k}\right\vert <1.
\end{equation*}
\end{remark}

Under conditions (h1) and (h2), a mild solution of the problem (\ref{1}) in $%
\overline{U}$ is a function $u\in \overline{U}$ such that%
\begin{eqnarray}
u\left( t\right) &=&T\left( t,0\right) BF\left( \int_{0}^{.}T\left(
.,s\right) \Phi \left( u\right) \left( s\right) ds\right)  \label{f1} \\
&&+\int_{0}^{t}T\left( t,s\right) \Phi \left( u\right) \left( s\right) ds,\
\ \ \text{for all \ }t\in \left[ 0,a\right] .  \notag
\end{eqnarray}

From now on, we shall denote by $\left[ 0,a_{F}\right] $ the support of $F.$
It is important to note that one has
\begin{equation*}
F\left( v\right) =F\left( \chi _{a_{F}}\left( v\right) \right) ,
\end{equation*}%
for all $v\in C\left( \left[ 0,a\right] ;X\right) ,$ where the operator $%
\chi _{a_{F}}:C\left( \left[ 0,a\right] ;X\right) \rightarrow C\left( \left[
0,a\right] ;X\right) $ is given by
\begin{equation*}
\chi _{a_{F}}\left( v\right) \left( t\right) =\left\{
\begin{array}{ll}
v\left( t\right) & \text{if }t\in \left[ 0,a_{F}\right] \\
v\left( a_{F}\right) & \text{if }t\in (a_{F},a].%
\end{array}%
\right.
\end{equation*}

We shall consider the integral operator $N:\overline{U}\rightarrow C\left( %
\left[ 0,a\right] ;X\right) $ defined by
\begin{equation}
N(u)(t)=T\left( t,0\right) BF\left( \int_{0}^{.}T(.,s)\Phi \left( u\right)
\left( s\right) ds\right) +\int_{0}^{t}T(t,s)\Phi \left( u\right) \left(
s\right) ds\,,\,\ t\in \lbrack 0,a].  \label{N}
\end{equation}%
Thus, any mild solution in $\overline{U}$ of (\ref{1}) is a fixed point of $%
N.$ Now M\"{o}nch's continuation theorem, Theorem \ref{th 1.1}, yields the
following very general existence principle for the problem (\ref{1}).

\begin{theorem}
\label{th 2.1}Assume that the conditions \emph{(h1)} and \emph{(h2)} hold.
In addition assume

\begin{description}
\item[(h3$^{0}$)] if $u=\lambda N\left( u\right) $ for some $u\in \overline{U%
}$ and $\lambda \in \left( 0,1\right) ,$ then $\left\vert u\left( t\right)
\right\vert <R\left( t\right) $ for all $t\in \left[ 0,a\right] .$

\item[(h4$^{0}$)] if $C\subset \overline{U}$ is countable and$\ C\subset
\overline{\text{\emph{conv}}}\left( \left\{ 0\right\} \cup N\left( C\right)
\right) ,$ then $\overline{C}$ is compact in $C\left( \left[ 0,a\right]
;X\right) .$
\end{description}

Then \emph{(\ref{1})} has a mild solution in $\overline{U}.$
\end{theorem}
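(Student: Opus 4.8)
The plan is to apply M\"{o}nch's continuation theorem (Theorem~\ref{th 1.1}) in the Banach space $C\left([0,a];X\right)$ with the sup norm, taking as open set $U$ and as distinguished point $u_{0}=0$. First I would record the elementary topological facts: $U$ is open, because for $u\in U$ the number $\min_{t\in[0,a]}\left(R\left(t\right)-\left\vert u\left(t\right)\right\vert\right)$ is strictly positive by continuity on a compact interval, so a small enough sup-ball about $u$ remains inside $U$; and $0\in U$ since $R\left(t\right)>0$ for every $t$. Moreover $\overline{U}$ is genuinely the closure of $U$ (approximate any $u\in\overline{U}$ by $\left(1-1/n\right)u\in U$), whence its boundary is $\partial U=\left\{u\in\overline{U}:\left\vert u\left(t_{0}\right)\right\vert=R\left(t_{0}\right)\text{ for some }t_{0}\in[0,a]\right\}$.

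Next I would verify that $N$ is continuous on $\overline{U}$, which I expect to be routine. It factors through the maps already under control: by (h1) the assignment $u\mapsto\Phi\left(u\right)$ is continuous into $L^{1}\left(0,a;X\right)$, the linear operator $w\mapsto\int_{0}^{\cdot}T\left(\cdot,s\right)w\left(s\right)ds$ is bounded from $L^{1}\left(0,a;X\right)$ into $C\left([0,a];X\right)$ thanks to the uniform bound $\left\vert T\left(t,s\right)\right\vert_{\mathcal{L}\left(X,X\right)}\leq M$ from \eqref{M}, and $F$, $B$, and $x\mapsto T\left(\cdot,0\right)x$ are bounded and linear by (h2) and the definition of the evolution operator. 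Composing these, both terms of \eqref{N} depend continuously on $u$, so $N:\overline{U}\rightarrow C\left([0,a];X\right)$ is continuous.

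The substance of the proof is then the identification of (h3$^{0}$) and (h4$^{0}$) with conditions (a) and (b) of Theorem~\ref{th 1.1} for $u_{0}=0$. Condition (b) is exactly (h4$^{0}$). For condition (a), I would argue by contradiction: if $N\left(u\right)=\lambda u$ for some $u\in\partial U$ and $\lambda>1$, then dividing by $\lambda$ gives $u=\mu N\left(u\right)$ with $\mu:=1/\lambda\in\left(0,1\right)$, and (h3$^{0}$) forces $\left\vert u\left(t\right)\right\vert<R\left(t\right)$ for all $t$, i.e.\ $u\in U$---contradicting $u\in\partial U$. Hence $N\left(u\right)-0\neq\lambda\left(u-0\right)$ on $\partial U$ for every $\lambda>1$, which is condition (a). Theorem~\ref{th 1.1} then produces a fixed point $u\in\overline{U}$ of $N$, and by the definition \eqref{N} of $N$ together with the notion of mild solution \eqref{f1}, this $u$ is a mild solution of \eqref{1} in $\overline{U}$. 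The one point demanding care---rather than any deep difficulty, since the theorem is designed as a direct corollary of M\"{o}nch's principle---is the matching of the parameter ranges, namely the passage between $\lambda>1$ in the Leray--Schauder homotopy and $\lambda\in\left(0,1\right)$ in (h3$^{0}$), together with the correct description of $\partial U$.
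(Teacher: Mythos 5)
Your proposal is correct and follows exactly the route the paper intends: the paper offers no written proof of Theorem \ref{th 2.1} beyond the remark that M\"{o}nch's continuation theorem applied to the operator $N$ of \eqref{N} (with $U$ as the open set and $u_{0}=0$) yields the result, and your verification of openness of $U$, continuity of $N$, the identification of (h4$^{0}$) with condition (b), and the passage from $\lambda>1$ in condition (a) to $\mu=1/\lambda\in\left(0,1\right)$ in (h3$^{0}$) supplies precisely the details left implicit there.
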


To convert the general principle from Theorem 2.1 into applicable existence
criteria, we have to find sufficient conditions for (h3$^{0}$), (h4$^{0}$)
to hold. To this aim, we consider the operators $N_{1},N_{2}:\overline{U}%
\rightarrow C\left( \left[ 0,a\right] ;X\right) $ given by%
\begin{equation}
N_{1}\left( u\right) (t)=T\left( t,0\right) BF\left( \int_{0}^{.}T(.,s)\Phi
\left( u\right) \left( s\right) ds\right) ,  \label{N1}
\end{equation}%
\begin{equation}
N_{2}\left( u\right) \left( t\right) =\int_{0}^{t}T(t,s)\Phi \left( u\right)
\left( s\right) ds,  \label{N2}
\end{equation}%
for every $t\in \left[ 0,a\right] $ and $u\in \overline{U},$ and for
simplicity, we denote
\begin{equation*}
\left\vert BF\right\vert =\left\vert BF\right\vert _{\mathcal{L}(C\left(
[0,a];X\right) ,X)}.
\end{equation*}

\begin{lemma}
\label{lemma 2.2}Assume that the conditions \emph{(h1)} and \emph{(h2)}
hold. In addition assume that

\begin{description}
\item[(h3)] there exist $\delta \in L_{+}^{1}\left( 0,a\right) $ and a
continuous nondecreasing function $\psi :\mathbf{R}_{+}\rightarrow \mathbf{R}%
_{+}$ with $\psi \left( s\right) >0$ for all $s>0,$ such that%
\begin{equation}
\left\vert \Phi \left( u\right) \left( t\right) \right\vert \leq \delta
\left( t\right) \psi \left( \left\vert u\left( t\right) \right\vert \right)
\ \ \ \text{for a.a. }t\in \left[ 0,a\right] \text{ and all }u\in \overline{U%
},  \label{11}
\end{equation}%
\begin{equation}
r:=M^{2}\left\vert BF\right\vert \left\vert \delta \left( .\right) \psi
\left( R\left( .\right) \right) \right\vert _{L^{1}\left( 0,a_F\right)
}<\min_{t\in \left[ 0,a\right] }R\left( t\right) ,  \label{7}
\end{equation}%
where $\left[ 0,a_F\right] $ is the support of $F,$ and%
\begin{equation}
\int_{r}^{R\left( t\right) }\frac{d\tau }{\psi \left( \tau \right) }\geq
M\left\vert \delta \right\vert _{L^{1}\left( 0,t\right) }\ \ \ \text{for all
}t\in \left[ 0,a\right] ,  \label{8}
\end{equation}%
where $M$ is given by \emph{(\ref{M})}.
\end{description}

Then the condition \emph{(h3}$^{0}$\emph{)} is satisfied.
\end{lemma}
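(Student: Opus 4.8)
The plan is to start from the relation $u=\lambda N(u)$ and reduce it to a scalar integral inequality for $\rho(t):=\left\vert u(t)\right\vert$, which the Bihari-type hypotheses (\ref{7})--(\ref{8}) will then close. Splitting $N=N_{1}+N_{2}$ as in (\ref{N1})--(\ref{N2}), I would estimate the two pieces separately. For the Fredholm-like term $N_{1}$, the decisive step is to exploit the support: since $F(v)=F(\chi_{a_F}(v))$ for every $v$, I may replace the argument $v(\cdot)=\int_{0}^{\cdot}T(\cdot,s)\Phi(u)(s)\,ds$ by $\chi_{a_F}(v)$ before applying the operator norm $\left\vert BF\right\vert$. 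Because $\chi_{a_F}(v)$ is constant, equal to $v(a_F)$, on $(a_F,a]$, its sup-norm equals $\sup_{t\in[0,a_F]}\left\vert v(t)\right\vert$; bounding this last quantity by means of (\ref{M}), (\ref{11}) and $\left\vert u(s)\right\vert\le R(s)$ (valid since $u\in\overline{U}$, with $\psi$ nondecreasing) yields exactly
\[
\lambda\left\vert N_{1}(u)(t)\right\vert\le M^{2}\left\vert BF\right\vert\,\left\vert \delta(\cdot)\psi(R(\cdot))\right\vert_{L^{1}(0,a_F)}=r .
\]

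For the Volterra term $N_{2}$, the same ingredients give $\lambda\left\vert N_{2}(u)(t)\right\vert\le M\int_{0}^{t}\delta(s)\psi(\rho(s))\,ds$. Adding the two estimates and factoring out $\lambda$, I obtain
\[
\rho(t)\le\lambda\Bigl(r+M\int_{0}^{t}\delta(s)\psi(\rho(s))\,ds\Bigr)=:\lambda\,w(t),
\]
so in particular $\rho(t)\le w(t)$. Then I would run the standard Bihari argument on the absolutely continuous function $w$: since $w'(t)=M\delta(t)\psi(\rho(t))\le M\delta(t)\psi(w(t))$ by monotonicity of $\psi$, dividing by $\psi(w(t))>0$ and integrating gives
\[
\int_{r}^{w(t)}\frac{d\tau}{\psi(\tau)}\le M\left\vert\delta\right\vert_{L^{1}(0,t)}\le\int_{r}^{R(t)}\frac{d\tau}{\psi(\tau)},
\]
the last inequality being precisely (\ref{8}). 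Since $\tau\mapsto\int_{r}^{\tau}d\sigma/\psi(\sigma)$ is strictly increasing on $(r,\infty)$, and (\ref{7}) guarantees $r<R(t)$ so that a genuine interval is compared, this forces $w(t)\le R(t)$.

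Finally, the strict inequality $\rho(t)<R(t)$ demanded by (h3$^{0}$) is recovered from $\lambda<1$: I already have $\rho(t)\le\lambda w(t)\le\lambda R(t)<R(t)$, because $R(t)>0$. I expect the only genuinely delicate point to be the support reduction for $N_{1}$, namely making rigorous that passing from $v$ to $\chi_{a_F}(v)$ replaces the $L^{1}(0,a)$ norm by the $L^{1}(0,a_F)$ norm; this is exactly what encodes the Volterra-to-Fredholm transition advertised in the introduction, and it collapses $r$ to $0$ when $a_F=0$. The Bihari step is then routine, with only mild care needed around the behaviour of $\int d\tau/\psi(\tau)$ near $\tau=0$ in the degenerate case $r=0$.
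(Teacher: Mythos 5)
Your argument is correct, and its skeleton coincides with the paper's: the same splitting $N=N_{1}+N_{2}$, the same support-based bound $\left\vert N_{1}\left( u\right) \left( t\right) \right\vert \leq r$ obtained from $F\left( v\right) =F\left( \chi _{a_{F}}\left( v\right) \right) $ together with $\psi \left( \left\vert u\left( s\right) \right\vert \right) \leq \psi \left( R\left( s\right) \right) $, and the same reduction to the scalar inequality $\left\vert u\left( t\right) \right\vert \leq \lambda \left( r+M\int_{0}^{t}\delta \left( s\right) \psi \left( \left\vert u\left( s\right) \right\vert \right) ds\right) $. Where you genuinely differ is in how this scalar comparison is closed. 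The paper argues by contradiction at a first touching point $b$ with $c\left( b\right) =R\left( b\right) $, integrates the differential inequality over $\left[ 0,b\right] $, and needs the strict inequality $\lambda M\left\vert \delta \right\vert _{L^{1}\left( 0,b\right) }<M\left\vert \delta \right\vert _{L^{1}\left( 0,b\right) }$ to contradict (\ref{8}); this only works when $\left\vert \delta \right\vert _{L^{1}\left( 0,b\right) }>0$, so a separate case analysis for $\left\vert \delta \right\vert _{L^{1}\left( 0,b\right) }=0$ is required. You instead run the Bihari comparison directly: with $w\left( t\right) =r+M\int_{0}^{t}\delta \left( s\right) \psi \left( \left\vert u\left( s\right) \right\vert \right) ds$ you have $w^{\prime }\leq M\delta \,\psi \left( w\right) $ unconditionally (since $\left\vert u\right\vert \leq \lambda w\leq w$), integration over $\left[ 0,t\right] $ for each fixed $t$ plus (\ref{8}) and the strict monotonicity of $\tau \mapsto \int_{r}^{\tau }d\sigma /\psi \left( \sigma \right) $ give $w\left( t\right) \leq R\left( t\right) $, and the strict inequality demanded by (h3$^{0}$) then comes for free from $\left\vert u\left( t\right) \right\vert \leq \lambda w\left( t\right) \leq \lambda R\left( t\right) <R\left( t\right) $, using $\lambda <1$ and $R\left( t\right) >0$. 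This buys a cleaner argument: no contradiction, no touching-point construction, no case distinction on whether $\delta $ vanishes. The one delicate point is shared by both proofs rather than introduced by yours: when $r=0$ (e.g.\ the classical Cauchy case $a_{F}=0$) and $\psi \left( 0\right) =0$, dividing by $\psi \left( w\right) $ (respectively $\psi \left( c\right) $ in the paper) on a set where $w$ (respectively $c$) vanishes needs the usual limiting/Osgood-type care; you at least flag this explicitly, and it affects the paper's change-of-variables step in exactly the same way.
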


\begin{proof}
Let $u=\lambda N\left( u\right) $ for some $u\in \overline{U}$ $\ $and$\
\lambda \in \left( 0,1\right) .$ Then, for each $t\in \left[ 0,a\right] ,$
by (\ref{M}), (\ref{11}) and (\ref{7}), one has%
\begin{eqnarray}
\left\vert u\left( t\right) \right\vert &\leq &\lambda \left( \left\vert
N_{1}\left( u\right) (t)\right\vert +\left\vert N_{2}\left( u\right) \left(
t\right) \right\vert \right) \,  \notag  \label{c(t)} \\
&\leq &\lambda \left( M|BF\left( \chi _{a_F}(N_{2}(u))\right)
|+M\int_{0}^{t}|\Phi (u)(s)|\,ds\right) \,  \notag \\
&\leq &\lambda \left( M|BF||\chi
_{a_F}(N_{2}(u))|_{C([0,a];X)}+M\int_{0}^{t}\delta (s)\psi
(|u(s)|)\,ds\right) \,  \notag \\
&\leq &\lambda\left ( M^{2}|BF|\sup_{t\in \lbrack 0,a_F]}|\delta (.)\psi
(|u(.)|)|_{L^{1}(0,t)}+M|\delta (.)\psi (|u(.)|)|_{L^{1}(0,t)}\right)\,
\notag \\
&\leq &\lambda \left( r+M|\delta (.)\psi (|u(.)|)|_{L^{1}(0,t)}\right)
\,=:c(t).
\end{eqnarray}%
We show that
\begin{equation}
c(t)<R(t)\ \ \ \text{for every\ \ }t\in \lbrack 0,a].  \label{cR}
\end{equation}%
First we note that, by (\ref{7}), $c(0)<R(0).$ Then, suppose by
contradiction that there exists $t^{\ast }\in (0,a]$ such that $c(t^{\ast
})\geq R(t^{\ast });$ therefore, we may find an interval $\left[ 0,b\right]
\subset \left[ 0,a\right] $ with
\begin{equation*}
c\left( t\right) <R\left( t\right) \ \ \text{for every\ \ }t\in \lbrack
0,b),\ \ c\left( b\right) =R\left( b\right) .
\end{equation*}%
By using (\ref{c(t)}) and (h3), we have
\begin{equation*}
c^{\prime }\left( t\right) =\lambda M\delta \left( t\right) \psi \left(
\left\vert u\left( t\right) \right\vert \right) \leq \lambda M\delta \left(
t\right) \psi \left( c\left( t\right) \right) ,\ \ \text{for a.a. \ }t\in %
\left[ 0,b\right] .
\end{equation*}%
This implies
\begin{equation}
\int_{0}^{b}\frac{c^{\prime }\left( s\right) }{\psi \left( c\left( s\right)
\right) }\,ds\leq \lambda M\int_{0}^{b}\delta \left( s\right) \,ds.
\label{dis int}
\end{equation}%
Since $c\left( 0\right) =\lambda r\leq r,$ we have%
\begin{equation*}
\int_{0}^{b}\frac{c^{\prime }\left( s\right) }{\psi \left( c\left( s\right)
\right) }\,ds=\int_{c\left( 0\right) }^{c\left( b\right) }\frac{d\tau }{\psi
\left( \tau \right) }=\int_{\lambda r}^{R\left( b\right) }\frac{d\tau }{\psi
\left( \tau \right) }\geq \int_{r}^{R\left( b\right) }\frac{d\tau }{\psi
\left( \tau \right) };
\end{equation*}%
so by (\ref{dis int}) we deduce
\begin{equation*}
\int_{r}^{R\left( b\right) }\frac{d\tau }{\psi \left( \tau \right) }\leq
\lambda M|\delta |_{L^{1}(0,b)}.
\end{equation*}%
Then, if $\left\vert \delta \right\vert _{L^{1}\left( 0,b\right) }>0,$ we
obtain
\begin{equation*}
\int_{r}^{R\left( b\right) }\frac{d\tau }{\psi \left( \tau \right) }\leq
\lambda M\left\vert \delta \right\vert _{L^{1}\left( 0,b\right)
}<M\left\vert \delta \right\vert _{L^{1}\left( 0,b\right) },
\end{equation*}%
which contradicts (\ref{8}). Note that in our case $c\left( b\right)
=R\left( b\right) ,$ the equality $\left\vert \delta \right\vert
_{L^{1}\left( 0,b\right) }=0$ is not possible, since otherwise $c\left(
b\right) =\lambda r<R\left( b\right) ,$ which is impossible. Therefore $%
c\left( t\right) <R\left( t\right) $ for every $t\in \left[ 0,a\right] ,$
whence $\left\vert u\left( t\right) \right\vert <R\left( t\right) $ for all $%
t\in \left[ 0,a\right] ,$ as desired.
\end{proof}

\begin{remark}
\label{rem 1}\emph{In particular, if }$R\left( t\right) =R$\emph{\ (positive
constant) for every }$t\in \left[ 0,a\right] ,$\emph{\ than }$\overline{U}=%
\overline{B}_{C}(0,R)$ \emph{\ and the conditions (\ref{7}), (\ref{8}) read
as follows:}%
\begin{equation}
r:=M^{2}\left\vert BF\right\vert \psi \left( R\right) \left\vert \delta
\right\vert _{L^{1}\left( 0,a_{F}\right) }<R,  \label{7'}
\end{equation}%
\begin{equation}
\int_{r}^{R}\frac{d\tau }{\psi \left( \tau \right) }\geq M\left\vert \delta
\right\vert _{L^{1}\left( 0,a\right) }.  \label{8'}
\end{equation}
\end{remark}

In order to prove the next result, in correspondence to the function $R,$ we
introduce the undergraph of $2R,$
\begin{equation*}
V_{R}=\left\{ \left( t,s\right) \in \mathbf{R}^{2}\,:\,0\leq s\leq 2R\left(
t\right) ,\ \,0\leq t\leq a\right\}
\end{equation*}%
and we say that a function $\omega :V_{R}\rightarrow \mathbf{R}_{+}$ is $%
L^{1}$-\textit{Carath\'{e}odory on the undergraph }$V_{R}$ if

\begin{description}
\item[$\left( \protect\omega 1\right) $] $\omega \left( .,s\right) $ is
measurable on $\{t\in \lbrack 0,a]\,:\,2R\left( t\right) \geq s\}$ for every
$s\in \left[ 0,2\left\vert R\right\vert _{L^{\infty }\left( 0,a\right) }%
\right] ;$

\item[$\left( \protect\omega 2\right) $] $\omega \left( t,.\right) $ is
continuous on $\left[ 0,2R\left( t\right) \right] ,$ for a.a. $t\in \lbrack
0,a];$

\item[$\left( \protect\omega 3\right) $] there exists $\eta \in
L_{+}^{1}\left( 0,a\right) $ such that $\omega \left( t,s\right) \leq \eta
(t),$ for all $s\in \left[ 0,2R\left( t\right) \right] $ and a.a. $t\in %
\left[ 0,a\right] .$
\end{description}

Moreover, we shall assume the following property

\begin{description}
\item[(h4)] there exists a function $\omega :V_{R}\rightarrow \mathbf{R}_{+}$
which is $L^{1}$-Carath\'{e}odory on the undergraph $V_{R}$ and such that
for each countable set $C\subset \overline{U},$%
\begin{equation}
\alpha \left( \Phi \left( C\right) \left( t\right) \right) \leq \omega
\left( t,\alpha \left( C\left( t\right) \right) \right) ,\ \ \text{for a.a. }%
t\in \left[ 0,a\right]  \label{2}
\end{equation}%
and that the unique solution $\varphi \in C\left[ 0,a\right] $ with graph$%
\left( \varphi \right) \subset V_{R}$ of the inequality%
\begin{eqnarray}
\varphi \left( t\right) &\leq &2M^{2}\left\vert BF\right\vert
\int_{0}^{a_F}\omega \left( s,\varphi \left( s\right) \right) \,ds\,
\label{6} \\
&&+\,2M\int_{0}^{t}\omega \left( s,\varphi \left( s\right) \right) \,ds,\ \
\text{for all}\ \ t\in \left[ 0,a\right]  \notag
\end{eqnarray}%
is $\varphi \equiv 0.$
\end{description}

Note that the condition (h4) is well posed; indeed, if $C\subset \overline{U}%
,$ then
\begin{equation*}
\alpha (C(t))\leq \alpha (B(0,R(t)))=2R(t),\ \ \text{for all}\ \ t\in
\lbrack 0,a].
\end{equation*}%
Hence, $(t,\alpha (C(t)))\in V_{R},$ for every $t\in \lbrack 0,a].$

\begin{remark}[the Kamke function of a nonlocal problem]
\emph{In the case of the classical Cauchy problem, when }$A\left( t\right)
=0 $\emph{\ for every }$t\in \left[ 0,a\right] $\emph{\ and }$F=0$\emph{\
(equivalently, when }$a_F=0$\emph{$)$, the inequality (\ref{6}) reduces to}%
\begin{equation*}
\varphi \left( t\right) \leq 2\int_{0}^{t}\omega \left( s,\varphi \left(
s\right) \right) ds,\ \ \ \text{for all\ }\ t\in \left[ 0,a\right]
\end{equation*}%
\emph{and the condition required in (h4) means that }$\omega $\emph{\ is a }%
Kamke function of the Cauchy problem.\emph{\ }\noindent \emph{By analogy, in
the case of our nonlocal problem (\ref{1}), the function }$\omega $\emph{\
in (h4) can be called a} Kamke function of the nonlocal initial value
problem.
\end{remark}

\begin{lemma}
\label{II lemma} Assume the conditions \emph{(h1)}, \emph{(h2)}, \emph{(h4)}
and

\begin{description}
\item[(h3')] there exist $\delta \in L_{+}^{1}\left( 0,a\right) $ and a
continuous nondecreasing function $\psi :\mathbf{R}_{+}\rightarrow \mathbf{R}%
_{+}$ with $\psi \left( s\right) >0$ for all $s>0,$ such that \emph{(\ref{11}%
)} holds.
\end{description}

Then the condition \emph{(h4}$^{0}$\emph{)} is satisfied.
\end{lemma}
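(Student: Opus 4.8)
The plan is to verify the compactness condition (h4$^{0}$) in the spirit of M\"onch's theorem: take a countable set $C\subset\overline{U}$ with $C\subset\overline{\text{conv}}(\{0\}\cup N(C))$ and show that $\overline{C}$ is compact in $C([0,a];X)$. Since $\alpha_C$ is invariant under passing to the closed convex hull and under adjoining a single point, monotonicity gives $\alpha_C(C)\le\alpha_C(N(C))$ and, pointwise, $\alpha(C(t))\le\alpha(N(C)(t))$ for every $t$. The strategy is to prove that $\varphi(t):=\alpha(C(t))$ satisfies the Kamke-type inequality (\ref{6}), so that (h4) forces $\varphi\equiv0$; combined with equicontinuity of $C$, this yields compactness of $\overline{C}$.

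First I would establish that $N(C)$---hence $C$, being contained in $\overline{\text{conv}}(\{0\}\cup N(C))$---is equicontinuous on $[0,a]$. Writing $N=N_1+N_2$ as in (\ref{N1})--(\ref{N2}), the term $N_2(u)(t)=\int_0^t T(t,s)\Phi(u)(s)\,ds$ is treated through the evolution identity $T(t_2,s)=T(t_2,t_1)T(t_1,s)$, which gives $N_2(u)(t_2)-N_2(u)(t_1)=(T(t_2,t_1)-I)N_2(u)(t_1)+\int_{t_1}^{t_2}T(t_2,s)\Phi(u)(s)\,ds$; the last integral is uniformly small by (\ref{11}) and the absolute continuity of $t\mapsto\int_0^t\delta(s)\psi(R(s))\,ds$, while the first term is controlled by the strong continuity of $T$ on the compact set $\Delta$. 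The term $N_1(u)(t)=T(t,0)\,BF(\int_0^{\cdot}T(\cdot,s)\Phi(u)(s)\,ds)$ is equicontinuous because $t\mapsto T(t,0)x$ is continuous uniformly for $x$ in the bounded set of vectors produced by $BF$. This equicontinuity is the delicate point, and the place where the strong continuity of the evolution operator together with the $L^1$-domination (h3') is essential.

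With equicontinuity in hand, I would estimate $\alpha(N(C)(t))\le\alpha(N_1(C)(t))+\alpha(N_2(C)(t))$. For $N_2$, inequality (\ref{h1}) together with (\ref{M}) and the Kamke condition (\ref{2}) of (h4) gives $\alpha(N_2(C)(t))\le 2\int_0^t\alpha(\{T(t,s)\Phi(u)(s):u\in C\})\,ds\le 2M\int_0^t\omega(s,\varphi(s))\,ds$, using $\alpha(\{T(t,s)\Phi(u)(s)\})\le M\,\alpha(\Phi(C)(s))\le M\,\omega(s,\varphi(s))$. For $N_1$, I would use $F=F\circ\chi_{a_F}$ to bring in the support: since $BF\in\mathcal{L}(C([0,a];X),X)$, one has $\alpha(N_1(C)(t))\le M\,|BF|\,\alpha_C(\{\chi_{a_F}(N_2(u)):u\in C\})$, and equicontinuity lets me apply (\ref{a1}) to replace $\alpha_C$ by the maximum over $\tau\in[0,a_F]$ of $\alpha(N_2(C)(\tau))$, whence $\alpha(N_1(C)(t))\le 2M^2|BF|\int_0^{a_F}\omega(s,\varphi(s))\,ds$. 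Adding the two contributions produces exactly (\ref{6}) for $\varphi$.

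Finally, since $\mathrm{graph}(\varphi)\subset V_R$ (because $\alpha(C(t))\le 2R(t)$) and $\varphi$ is continuous (a consequence of equicontinuity via (\ref{a1})), the uniqueness clause in (h4) forces $\varphi\equiv0$, i.e. $\alpha(C(t))=0$ for all $t\in[0,a]$. Thus each $C(t)$ is relatively compact, and together with the equicontinuity of $C$ and (\ref{a1}) this gives $\alpha_C(C)=0$, so $\overline{C}$ is compact and (h4$^{0}$) holds. I expect the genuine obstacle to be the equicontinuity step for a merely strongly continuous evolution family; everything after it is a bookkeeping of measure-of-noncompactness inequalities engineered so that the support $a_F$ enters only through the $N_1$ term, producing precisely the nonlocal Kamke inequality (\ref{6}).
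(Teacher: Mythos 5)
Your proposal is correct and follows essentially the same route as the paper's proof: first the equicontinuity of $N(C)$ (hence of $C$) using the splitting $N=N_{1}+N_{2}$, then the measure-of-noncompactness estimates via (\ref{h1}), (\ref{a1}) and the support operator $\chi_{a_{F}}$ to arrive at the nonlocal Kamke inequality (\ref{6}) for $\varphi(t)=\alpha(C(t))$, and finally (h4) plus pointwise relative compactness and equicontinuity to conclude. The only caveat is in your equicontinuity step for $N_{2}$ via the identity $N_{2}(u)(t_{2})-N_{2}(u)(t_{1})=\left( T(t_{2},t_{1})-I\right) N_{2}(u)(t_{1})+\int_{t_{1}}^{t_{2}}T(t_{2},s)\Phi (u)(s)\,ds$: since $\left\{ N_{2}(u)(t_{1}):u\in C\right\}$ is only known to be bounded (not compact) at that stage, controlling the first term requires the operator-norm continuity of $T$ assumed in hypothesis (A) (so that $\left\vert T(t_{2},t_{1})-I\right\vert _{\mathcal{L}(X,X)}$ is uniformly small), not mere strong continuity --- which is precisely the uniform continuity of $T$ on $\Delta$ that the paper's own proof invokes.
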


\noindent \textbf{Proof.} Let $C\subset \overline{U}$ be countable with
\begin{equation}
C\subset \overline{\text{conv}}\left( \left\{ 0\right\} \cup N\left(
C\right) \right) ,  \label{Csub}
\end{equation}%
where $N$ is given by (\ref{N}). \smallskip First we show that $C$ is
equicontinuous. For this, it is enough to prove the equicontinuity of the
set $N\left( C\right) .$ \noindent First of all, we have that $N_{2}\left(
C\right) $ is equicontinuous. In fact, for any fixed $\varepsilon >0,$ in
correspondence to $\varepsilon /6M,$ there exists $\eta (\varepsilon /6M)>0$
such that for every measurable set $\mathcal{M}$ with $\lambda (\mathcal{M}%
)<\eta (\varepsilon /6M)$ (where $\lambda $ denotes the Lebesgue measure on $%
[0,a]$) one has $\int_{\mathcal{M}}\delta (s)\psi (R(s))\,ds<\varepsilon
/6M, $ where $\delta (.)\psi (R(.))\in L_{+}^{1}(0,a)$ (see (h3')). Let us
fix $\gamma >0$ with $\gamma <\eta (\varepsilon /6M).$ For any $u\in C$ and $%
t,\overline{t}\in \lbrack 0,a]$ with $0<t-\overline{t}<\gamma ,$ by using (%
\ref{M}) and hypothesis (h3'), we have%
\begin{eqnarray}
&&\left\vert N_{2}\left( u\right) \left( t\right) -N_{2}\left( u\right)
\left( \overline{t}\right) \right\vert  \label{E2} \\
&=&\left\vert \int_{0}^{\overline{t}}T(t,s)\Phi (u)(s)\,ds+\int_{\overline{t}%
}^{t}T(t,s)\Phi (u)(s)\,ds-\int_{0}^{\overline{t}}T(\overline{t},s)\Phi
(u)(s)\,ds\right\vert \,  \notag \\
&\leq &\int_{0}^{\overline{t}}\left\vert T(t,s)-T(\overline{t},s)\right\vert
_{\mathcal{L}(X,X)}|\Phi (u)(s)|\,ds+M\int_{\overline{t}}^{t}|\Phi
(u)(s)|\,ds\,  \notag \\
&\leq &\int_{0}^{\overline{t}}\left\vert T(t,s)-T(\overline{t},s)\right\vert
_{\mathcal{L}(X,X)}\delta (s)\psi (R(s))\,ds+M\int_{\overline{t}}^{t}\delta
(s)\psi (R(s))\,ds  \notag \\
&\leq &\int_{0}^{\overline{t}-\gamma }\left\vert T(t,s)-T(\overline{t}%
,s)\right\vert _{\mathcal{L}(X,X)}\delta (s)\psi (R(s))\,ds+\,2M\int_{%
\overline{t}-\gamma }^{\overline{t}}\delta (s)\psi (R(s))\,ds  \notag \\
&&+M\int_{\overline{t}}^{t}\delta (s)\psi (R(s))\,ds\,  \notag \\
&\leq &\int_{0}^{\overline{t}-\gamma }\left\vert T(t,s)-T(\overline{t}%
,s)\right\vert _{\mathcal{L}(X,X)}\delta (s)\psi (R(s))\,ds+\varepsilon
/3+\varepsilon /6.  \notag
\end{eqnarray}%
\noindent Let $H:=\int_{0}^{a}\delta (s)\psi (R(s))\,ds.$ By the uniform
continuity of the evolution operator $T,$ there exists $\eta (\varepsilon
/3H)>0$ which can be chosen with $\eta (\varepsilon /3H)\leq \gamma ,$ such
that if $0<t-\overline{t}<\eta (\varepsilon /3H)\,,\,s\in \lbrack 0,%
\overline{t}],$ then $|T(t,s)-T(\overline{t},s)|_{\mathcal{L}%
(X,X)}<\varepsilon /3H.$ So (\ref{E2}) yields
\begin{equation*}
\left\vert N_{2}(u)(t)-N_{2}(u)(\overline{t})\right\vert \leq \varepsilon
/3\,+\,\varepsilon /3\,+\,\varepsilon /6\,<\varepsilon .
\end{equation*}%
Hence $N_{2}(C)$ is equicontinuous.

To prove that $N_{1}\left( C\right) $ is equicontinuous, first observe that
by (\ref{N2}), the map $N_{1}$ in (\ref{N1}) can be written as
\begin{equation*}
N_{1}\left( u\right) (t)=T\left( t,0\right) BF\left( N_{2}(u)\right) ,
\end{equation*}%
for all $t\in \lbrack 0,a]$ and $u\in \overline{U}.$ Denote
\begin{equation}
\widetilde{M}=M\left\vert BF\right\vert |\delta (.)\psi (R(.))|_{L^{1}(0,a)}.
\label{tildeM}
\end{equation}%
By the continuity of the evolution operator $T,$ we have that for every $%
\varepsilon >0,$ there exists $\eta (\varepsilon /\widetilde{M})>0$ such
that for every $t,\overline{t}\in \lbrack 0,a]$ with $|t-\overline{t}|<\eta
(\varepsilon /\widetilde{M}),$ we have
\begin{equation}
|T(t,0)-T(\bar{t},0)|_{\mathcal{L}(X,X)}<\varepsilon /\widetilde{M}.
\label{T0}
\end{equation}%
Assuming without less of generality that $t>\overline{t},$ according to (\ref%
{N2}), (h3'), (\ref{tildeM}) and (\ref{T0}), for every $u\in C,$ we have the
following estimation%
\begin{eqnarray*}
&&|N_{1}(u)(t)-N_{1}(u)(\overline{t})|\, \\
&=&\,|[T(t,0)-T(\overline{t},0)]BF(N_{2}(u))|\, \\
&\leq &|T(t,0)-T(\overline{t},0)|_{\mathcal{L}%
(X,X)}|BF||N_{2}(u)|_{C([0,a];X) }\, \\
&\leq &|T(t,0)-T(\overline{t},0)|_{\mathcal{L}(X,X)}M|BF|\sup_{t\in \lbrack
0,a]}\int_{0}^{t}|\Phi (u)(s)|\,ds\, \\
&\leq &|T(t,0)-T(\overline{t},0)|_{\mathcal{L}(X,X)}M|BF|\int_{0}^{a}\delta
(s)\psi (R(s))\,ds\, \\
&=&\,|T(t,0)-T(\overline{t},0)|_{\mathcal{L}(X,X)}\widetilde{M}%
\,<\,\varepsilon .
\end{eqnarray*}%
So $N_{1}(C)$ is equicontinuous. \noindent Hence, by (\ref{N}), (\ref{N1})
and (\ref{N2}), we have the equicontinuity of $N(C).$ Therefore, by (\ref%
{Csub}), the set $C$ is equicontinuous too. \noindent Furthermore, for every
fixed $t\in \lbrack 0,a],$ the set $C(t)$ is relatively compact in $X.$
Indeed, $C$ is bounded in $C([0,a];X)$ and
\begin{eqnarray}
\alpha \left( C\left( t\right) \right) &\leq &\alpha \left( \overline{\text{%
conv}}\left( \left\{ 0\right\} \cup N\left( C\right) \left( t\right) \right)
\right) =\alpha \left( N\left( C\right) \left( t\right) \right)  \label{3} \\
&\leq &\alpha \left( N_{1}\left( C\right) (t)\right) +\alpha \left(
N_{2}\left( C\right) \left( t\right) \right) .  \notag
\end{eqnarray}%
According to (\ref{h1}) and (h4), we have
\begin{eqnarray}
\alpha (N_{2}(C)(t)) &\leq &2\int_{0}^{t}\alpha (T(t,s)\Phi (C)(s))\,ds\,
\label{4} \\
&\leq &2M\int_{0}^{t}\omega (s,\alpha (C(s)))\,ds.  \notag
\end{eqnarray}

\noindent In addition, using the linearity of the mapping $BF$ and (\ref{a1}%
), we deduce that%
\begin{eqnarray}
\alpha \left( N_{1}(C)(t)\right) &\leq &M|BF|\alpha _{C}\left( \chi
_{a_F}\left( N_{2}(C)\right) \right) \,  \label{5} \\
&=&M|BF|\max_{t\in \lbrack 0,a_F]}\alpha (N_{2}(C)(t))\,  \notag \\
&\leq &2M^{2}\left\vert BF\right\vert \int_{0}^{a_F}\omega \left( s,\alpha
\left( C\left( s\right) \right) \right) \,ds.  \notag
\end{eqnarray}%
Now (\ref{3}), (\ref{4}) and (\ref{5}) give%
\begin{equation*}
\alpha \left( C\left( t\right) \right) \leq 2M^{2}\left\vert BF\right\vert
\int_{0}^{a_F}\omega \left( s,\alpha \left( C\left( s\right) \right) \right)
\,ds+2M\int_{0}^{t}\omega (s,\alpha (C(s)))\,ds.
\end{equation*}%
Hence the function
\begin{equation*}
\varphi \left( t\right) =\alpha \left( C\left( t\right) \right) ,\ \ \ \text{%
for all \ }t\in \left[ 0,a\right]
\end{equation*}%
solves (\ref{6}). In addition $\varphi $ is continuous on $\left[ 0,a\right]
$ and its graph is contained in $V.$ Consequently, $\varphi \equiv 0,$ that
is $\alpha \left( C\left( t\right) \right) =0$ for all $t\in \left[ 0,a%
\right] .$ Thus $C\left( t\right) $ is relatively compact in $X$ for each $%
t\in \left[ 0,a\right] ,$ as desired. \hfill $\Box $\medskip

Now Theorem \ref{th 2.1} and Lemmas \ref{lemma 2.2} and \ref{II lemma} yield
the main existence result for (\ref{1}).

\begin{theorem}
\label{thm 2.2}Assume that the conditions \emph{(h1)-(h4)} are satisfied.
Then \emph{(\ref{1})} has a mild solution in $\overline{U}.$
\end{theorem}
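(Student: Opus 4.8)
The plan is to obtain Theorem \ref{thm 2.2} as a direct synthesis of the general existence principle in Theorem \ref{th 2.1} and the two preparatory Lemmas \ref{lemma 2.2} and \ref{II lemma}. Theorem \ref{th 2.1} guarantees a mild solution of (\ref{1}) in $\overline{U}$ as soon as the four conditions (h1), (h2), (h3$^{0}$) and (h4$^{0}$) are in force, so the whole task reduces to producing these four conditions out of the stated hypotheses (h1)--(h4). Two of them, (h1) and (h2), are assumed verbatim, and only (h3$^{0}$) and (h4$^{0}$) have to be recovered.

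The single observation on which the argument rests is the logical relation between (h3) and (h3'): the condition (h3') asks merely for functions $\delta \in L_{+}^{1}(0,a)$ and $\psi$ realizing the growth estimate (\ref{11}), whereas (h3) demands exactly such $\delta$ and $\psi$ \emph{together with} the two quantitative inequalities (\ref{7}) and (\ref{8}). Hence (h3) is strictly stronger than (h3') and in particular implies it. With this in hand I would proceed in two steps. First, since (h1), (h2) and (h3) hold, Lemma \ref{lemma 2.2} yields (h3$^{0}$). Second, since (h3) entails (h3'), the hypotheses (h1), (h2), (h4) and (h3') are simultaneously available, so Lemma \ref{II lemma} delivers (h4$^{0}$).

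At this point all four hypotheses of Theorem \ref{th 2.1} are satisfied, and applying it produces a fixed point of the operator $N$ of (\ref{N}) in $\overline{U}$, which by the equivalence recorded just after (\ref{N}) is precisely a mild solution of (\ref{1}) in $\overline{U}$. I do not expect any substantive obstacle in this proof: the genuine analytical work—the differential-inequality comparison underlying (h3$^{0}$) and the equicontinuity and measure-of-noncompactness estimates underlying (h4$^{0}$)—has already been discharged inside Lemmas \ref{lemma 2.2} and \ref{II lemma}. The only point deserving explicit care is verifying that (h3) indeed subsumes (h3'), so that Lemma \ref{II lemma} is legitimately applicable; beyond that, the proof is pure bookkeeping, assembling the premises of Theorem \ref{th 2.1} and invoking it.
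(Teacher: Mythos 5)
Your proposal is correct and is exactly the paper's own argument: the theorem is obtained by combining Theorem \ref{th 2.1} with Lemma \ref{lemma 2.2} (giving (h3$^{0}$)) and Lemma \ref{II lemma} (giving (h4$^{0}$)), the latter applicable because (h3) contains the growth estimate (\ref{11}) and hence implies (h3'). Nothing is missing; your explicit remark that (h3) subsumes (h3') is the only non-trivial bookkeeping point, and you handled it correctly.
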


In the setting of Remark \ref{rem 1}, from Theorem \ref{thm 2.2} we deduce
the following result.

\begin{corollary}[case of time-independent radius]
Assume that the conditions \emph{(h1)}, \emph{(h2)} and \emph{(h4)} hold,
where $\overline{U}=\overline{B}_{C}(0,R),$ $R>0$ and $V_{R}=\{\left(
t,s\right) \in \mathbf{R}^{2}\,:\,0\leq s\leq 2R\,,$ $0\leq t\leq a\}.$ In
addition assume that

\begin{description}
\item[(h3*)] there exist $\delta \in L_{+}^{1}\left( 0,a\right) $ and a
continuous nondecreasing map function $\psi :\mathbf{R}_{+}\rightarrow
\mathbf{R}_{+}$ with $\psi \left( s\right) >0$ for all $s>0,$ such that
\begin{equation*}
\left\vert \Phi (u)(t)\right\vert \leq \delta (t)\psi \left( \left\vert
u(t)\right\vert \right) ,\ \ \text{for a.a. }t\in \left[ 0,a\right] \ \
\text{and all}\ \ u\in \overline{B}_{C}(0,R),\,
\end{equation*}%
\begin{equation}
\frac{R}{\psi \left( R\right) }\geq M^{2}\left\vert BF\right\vert \left\vert
\delta \right\vert _{L^{1}\left( 0,a_F\right) }+M\left\vert \delta
\right\vert _{L^{1}\left( 0,a\right) },  \label{9}
\end{equation}%
where $M$ is from \emph{(\ref{M})}.
\end{description}

Then \emph{(\ref{1})} has a mild solution in $\overline{B}_{C}(0,R).$
\end{corollary}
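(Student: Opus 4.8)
The plan is to derive this corollary directly from the main existence result, Theorem \ref{thm 2.2}, by specializing to the constant radius $R(t)\equiv R$ and checking that hypothesis (h3*) forces (h3) to hold in the form recorded in Remark \ref{rem 1}. Since (h1), (h2) and (h4) are assumed outright, and the pointwise growth bound in (h3*) is literally (\ref{11}) read with $\overline{U}=\overline{B}_C(0,R)$, the only thing left to establish is that the single scalar inequality (\ref{9}) encodes both (\ref{7'}) and (\ref{8'}). Once this is done, Remark \ref{rem 1} tells us that (h3) is satisfied for $R(t)\equiv R$, and Theorem \ref{thm 2.2} immediately delivers a mild solution in $\overline{B}_C(0,R)$.

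First I would multiply (\ref{9}) through by $\psi(R)$. This is legitimate and harmless because $R>0$ and $\psi(s)>0$ for $s>0$, so $\psi(R)>0$. The result is
\[
R \ge M^{2}\,|BF|\,\psi(R)\,|\delta|_{L^{1}(0,a_F)} + M\,\psi(R)\,|\delta|_{L^{1}(0,a)} = r + M\,\psi(R)\,|\delta|_{L^{1}(0,a)},
\]
where $r$ is precisely the quantity defined in (\ref{7'}). The strict inequality $r<R$ of (\ref{7'}) now follows: if $|\delta|_{L^{1}(0,a)}>0$, the second summand is strictly positive, whereas if $\delta=0$ a.e.\ then $r=0<R$ trivially. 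Thus (\ref{7'}) holds.

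Next, to obtain (\ref{8'}), I would exploit the monotonicity of $\psi$. For $\tau\in[r,R]$ one has $\psi(\tau)\le\psi(R)$, hence $1/\psi(\tau)\ge 1/\psi(R)$ on $(r,R]$, and therefore
\[
\int_{r}^{R}\frac{d\tau}{\psi(\tau)} \ge \frac{R-r}{\psi(R)}.
\]
Combining this with the rearranged inequality $R-r\ge M\,\psi(R)\,|\delta|_{L^{1}(0,a)}$ obtained above yields $\int_{r}^{R} d\tau/\psi(\tau) \ge M\,|\delta|_{L^{1}(0,a)}$, which is exactly (\ref{8'}). This completes the verification of (h3) via Remark \ref{rem 1}, and the conclusion follows from Theorem \ref{thm 2.2}.

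The computations are elementary, so there is no serious obstacle; the only points needing a little care are the positivity of $\psi(R)$ (guaranteed by $R>0$), which licenses multiplying and dividing by it, and the degenerate case $\delta\equiv 0$, where the strict inequality in (\ref{7'}) must be read off from $r=0<R$ rather than from the vanishing product term.
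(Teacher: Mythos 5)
Your proposal is correct and follows essentially the same route as the paper's own proof: multiply (\ref{9}) by $\psi(R)$ to get (\ref{7'}) (treating the degenerate case $\delta\equiv 0$ separately), use the monotonicity bound $\int_{r}^{R}d\tau/\psi(\tau)\geq (R-r)/\psi(R)$ to get (\ref{8'}), and then conclude via Remark \ref{rem 1}, Lemma \ref{lemma 2.2} and Theorem \ref{thm 2.2}. The only difference is cosmetic: you write out the algebra that the paper compresses into ``it is easy to see.''
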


\noindent \textbf{Proof.} First of all, we show that under conditions (h1),
(h2) and (h3$^{\ast }$), the condition (h3) is satisfied in the case of
time-independent radius. It is easy to see that (\ref{7'}) follows from (\ref%
{9}) if $\left\vert \delta \right\vert _{L^{1}\left( 0,a\right) }>0;$
otherwise (\ref{7'}) is trivially satisfied. \noindent Furthermore, since
the function $\psi $ is nondecreasing, we have
\begin{equation*}
\int_{r}^{R}\frac{d\tau }{\psi \left( \tau \right) }\geq \frac{R-r}{\psi
\left( R\right) }
\end{equation*}%
and thus, by (\ref{9}) and the definition of $r$ (see (\ref{7'})), condition
(\ref{8'}) holds. According to Lemma \ref{lemma 2.2} and Remark \ref{rem 1},
in the case of time-independent radius, the condition (h3$^{0}$) is
satisfied. Now Theorem \ref{thm 2.2} finishes the proof. \hfill $\Box $
\medskip

Note that the condition (\ref{9}) guarantees even more, namely that $N\left(
\overline{U}\right) \subset \overline{U}.$

A much more applicable result can be derived from Theorem \ref{thm 2.2}.

\begin{theorem}
\label{thm2.3}Assume that \emph{(h1), (h2)} and \emph{(h3)} hold. In
addition assume that the following condition is satisfied:

\begin{description}
\item[(h4*)] $\Phi =\Psi +\Theta ,$ where $\Theta \left( \overline{U}\right)
\left( t\right) \subset K$ for a.a $t\in \left[ 0,a\right] ,$ $K$ being a
compact set in $X,$ and there exists $\gamma \in L_{+}^{1}\left( 0,a\right) $
such that for each countable set $C\subset \overline{U},$
\begin{equation}
\alpha \left( \Psi \left( C\right) \left( t\right) \right) \leq \gamma
\left( t\right) \alpha \left( C\left( t\right) \right) ,\ \ \ \text{for a.a.
~}t\in \left[ 0,a\right]  \label{10}
\end{equation}%
and%
\begin{equation}
\left( 2M^{2}\left\vert BF\right\vert +2M\right) \left\vert \gamma
\right\vert _{L^{1}\left( 0,a_F\right) }<1.  \label{13}
\end{equation}
\end{description}

Then \emph{(\ref{1})} has a mild solution in $\overline{U}.$
\end{theorem}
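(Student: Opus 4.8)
The plan is to reduce Theorem \ref{thm2.3} to the already-established Theorem \ref{thm 2.2} by showing that hypothesis (h4*) implies hypothesis (h4). Since (h1), (h2), (h3) are assumed directly, the only missing ingredient is (h4), and (h3) implies (h3$'$) trivially (the estimate \eqref{11} is exactly what (h3$'$) requires). Thus everything hinges on constructing an $L^{1}$-Carath\'{e}odory function $\omega$ on the undergraph $V_{R}$ that satisfies both the measure-of-noncompactness bound \eqref{2} and the uniqueness condition for the integral inequality \eqref{6}.

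First I would analyze the measure of noncompactness of $\Phi(C)(t)$ under the splitting $\Phi=\Psi+\Theta$. Because $\Theta\left(\overline{U}\right)(t)\subset K$ with $K$ compact, the set $\Theta(C)(t)$ is relatively compact, so $\alpha\left(\Theta(C)(t)\right)=0$. Using the subadditivity of $\alpha$ and the fact that $\Phi(C)(t)\subset\Psi(C)(t)+\Theta(C)(t)$, I would obtain
\begin{equation*}
\alpha\left(\Phi(C)(t)\right)\leq\alpha\left(\Psi(C)(t)\right)+\alpha\left(\Theta(C)(t)\right)\leq\gamma(t)\,\alpha\left(C(t)\right),
\end{equation*}
for a.a. $t\in[0,a]$, by \eqref{10}. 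This suggests the natural candidate $\omega(t,s)=\gamma(t)\,s$ on $V_{R}$. I would then verify that this $\omega$ is $L^{1}$-Carath\'{e}odory: $(\omega1)$ and $(\omega2)$ are immediate since $\omega(\cdot,s)=\gamma(\cdot)s$ is measurable and $\omega(t,\cdot)$ is linear hence continuous, and for $(\omega3)$ the bound $\omega(t,s)=\gamma(t)s\leq 2\left\vert R\right\vert_{L^{\infty}(0,a)}\gamma(t)=:\eta(t)\in L_{+}^{1}(0,a)$ holds for all $s\in[0,2R(t)]$. The bound \eqref{2} is exactly the displayed inequality above, so (h4)'s first requirement is met.

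The main obstacle, and the crux of the proof, is the uniqueness clause in (h4): I must show that the only continuous $\varphi$ with graph in $V_{R}$ satisfying
\begin{equation*}
\varphi(t)\leq 2M^{2}\left\vert BF\right\vert\int_{0}^{a_{F}}\gamma(s)\varphi(s)\,ds+2M\int_{0}^{t}\gamma(s)\varphi(s)\,ds
\end{equation*}
is $\varphi\equiv 0$. Here the smallness condition \eqref{13} enters. I would set $\left\vert\varphi\right\vert_{\infty}=\max_{t\in[0,a]}\varphi(t)$ and bound the right-hand side crudely: the first integral is at most $2M^{2}\left\vert BF\right\vert\left\vert\gamma\right\vert_{L^{1}(0,a_{F})}\left\vert\varphi\right\vert_{\infty}$, and since the support $[0,a_{F}]$ contains the relevant mass, the second (Volterra) term over $[0,t]$ is at most $2M\left\vert\gamma\right\vert_{L^{1}(0,a_{F})}\left\vert\varphi\right\vert_{\infty}$ — here I would need to be slightly careful, bounding $\int_{0}^{t}\gamma(s)\varphi(s)\,ds\leq\left\vert\gamma\right\vert_{L^{1}(0,a)}\left\vert\varphi\right\vert_{\infty}$ and recognizing that \eqref{13} is stated with the $L^{1}(0,a_{F})$ norm, so the cleanest route is a Gronwall-type argument on the Volterra part rather than a raw sup bound.

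Concretely, I would first absorb the Fredholm (nonlocal) term: taking the supremum over $t$ and isolating $\left\vert\varphi\right\vert_{\infty}$ gives $\left\vert\varphi\right\vert_{\infty}\leq\left(2M^{2}\left\vert BF\right\vert+2M\right)\left\vert\gamma\right\vert_{L^{1}(0,a_{F})}\left\vert\varphi\right\vert_{\infty}$ provided one uses that $\varphi$ attains its max and that both integral contributions are controlled by the $a_{F}$-norm (the Volterra integral $\int_{0}^{t}\gamma\varphi$ being dominated by its value over the full support). By \eqref{13} the bracketed constant is strictly less than $1$, forcing $\left\vert\varphi\right\vert_{\infty}=0$, hence $\varphi\equiv 0$. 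If the direct sup estimate does not cleanly yield the $a_{F}$-norm on the Volterra term, the fallback is a standard Gronwall inequality: treat the constant $2M^{2}\left\vert BF\right\vert\left\vert\gamma\right\vert_{L^{1}(0,a_{F})}\left\vert\varphi\right\vert_{\infty}$ as a fixed nonlocal datum $c\geq 0$, apply Gronwall to deduce $\varphi(t)\leq c\exp\left(2M\int_{0}^{t}\gamma(s)\,ds\right)$, then re-substitute into the nonlocal term and use \eqref{13} to conclude $c=0$. Once (h4) is established, all hypotheses of Theorem \ref{thm 2.2} hold, and that theorem delivers the mild solution in $\overline{U}$, completing the proof.
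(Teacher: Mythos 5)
Your reduction to Theorem \ref{thm 2.2} via the Kamke function $\omega(t,s)=\gamma(t)s$ is exactly the right (and the paper's) strategy, and your verification of \eqref{2} and of the Carath\'eodory conditions is fine. The gap is in the uniqueness argument for the integral inequality \eqref{6}, and it is a genuine one. Your primary route sets $\left\vert\varphi\right\vert_{\infty}=\max_{t\in[0,a]}\varphi(t)$ and claims the Volterra term $2M\int_{0}^{t}\gamma(s)\varphi(s)\,ds$ is ``dominated by its value over the full support'', i.e.\ controlled by $2M\left\vert\gamma\right\vert_{L^{1}(0,a_{F})}\left\vert\varphi\right\vert_{\infty}$. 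This is false: $[0,a_{F}]$ is the support of $F$, not of $\gamma$; the hypothesis only gives $\gamma\in L_{+}^{1}(0,a)$, so $\gamma$ may carry arbitrary mass on $(a_{F},a]$, and for $t>a_{F}$ the Volterra integral is not controlled by the $L^{1}(0,a_{F})$ norm, while \eqref{13} says nothing about $\left\vert\gamma\right\vert_{L^{1}(0,a)}$. Your fallback inherits the same defect: since your constant $c$ is built from $\max_{[0,a]}\varphi$, closing the self-referential estimate after Gronwall requires $2M^{2}\left\vert BF\right\vert\left\vert\gamma\right\vert_{L^{1}(0,a_{F})}\exp\left(2M\left\vert\gamma\right\vert_{L^{1}(0,a)}\right)<1$, which \eqref{13} does not imply; and even in the repaired version where the re-substitution is confined to $[0,a_{F}]$, you would still need to prove that the linear condition \eqref{13} implies the exponential one $2M^{2}\left\vert BF\right\vert\left\vert\gamma\right\vert_{L^{1}(0,a_{F})}\,e^{2M\left\vert\gamma\right\vert_{L^{1}(0,a_{F})}}<1$ --- which is true (via $x+e^{-x}\geq 1$) but is nowhere verified in your proposal.

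The missing idea, which is how the paper proceeds, is a two-stage localization. First restrict \eqref{6} to $t\in[0,a_{F}]$: there \emph{both} integrals run over subsets of $[0,a_{F}]$, so the sup bound is legitimate and yields exactly \eqref{mf}, namely $\left\vert\varphi\right\vert_{L^{\infty}(0,a_{F})}\leq\left(2M^{2}\left\vert BF\right\vert+2M\right)\left\vert\gamma\right\vert_{L^{1}(0,a_{F})}\left\vert\varphi\right\vert_{L^{\infty}(0,a_{F})}$, whence $\varphi\equiv 0$ on $[0,a_{F}]$ by \eqref{13} and continuity. Once this is known, the nonlocal term in \eqref{6} vanishes identically, and for $t\in[a_{F},a]$ the inequality reduces to $\varphi(t)\leq 2M\int_{a_{F}}^{t}\gamma(s)\varphi(s)\,ds$, which Gronwall annihilates with no smallness assumption at all on $\gamma$ over $(a_{F},a]$. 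This ordering --- first kill $\varphi$ on the support of $F$, then propagate forward by Gronwall --- is precisely what makes \eqref{13}, a condition on $[0,a_{F}]$ only, sufficient; taking suprema over all of $[0,a]$ from the start, as you do, cannot succeed without extra hypotheses on $\gamma$ outside the support.
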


\begin{proof}
We shall check (h4). Since $\Theta \left( \overline{U}\right) \left(
t\right) \subset K$ for a.a $t\in \left[ 0,a\right] ,$ $K$ being a compact
set in $X,$ from (\ref{10}) we see that (\ref{2}) holds with $\omega \left(
t,s\right) =\gamma \left( t\right) s,\ \left( t,s\right) \in V_{R}.$ Now let
$\varphi \in C\left[ 0,a\right] $ with graph$\left( \varphi \right) \subset
V_{R},$ be any solution of (\ref{6}), that is%
\begin{equation}
\varphi \left( t\right) \leq 2M^{2}\left\vert BF\right\vert \left\vert
\gamma \varphi \right\vert _{L^{1}\left( 0,a_{F}\right) }+2M\left\vert
\gamma \varphi \right\vert _{L^{1}\left( 0,t\right) },\ \ t\in \left[ 0,a%
\right] .  \label{i1}
\end{equation}%
First we show that $\varphi \left( t\right) =0$ for all $t\in \left[ 0,a_{F}%
\right] .$ Indeed, from (\ref{i1}), since $\varphi $ is nonnegative, we
deduce%
\begin{equation}
\left\vert \varphi \right\vert _{L^{\infty }\left( 0,a_{F}\right) }\leq
\left\vert \varphi \right\vert _{L^{\infty }\left( 0,a_{F}\right) }\left(
2M^{2}\left\vert BF\right\vert +2M\right) \left\vert \gamma \right\vert
_{L^{1}\left( 0,a_{F}\right) },  \label{mf}
\end{equation}%
which in view of (\ref{13}) gives $\left\vert \varphi \right\vert
_{L^{\infty }\left( 0,a_{F}\right) }=0.$ Then from the continuity of $%
\varphi ,$ we deduce $\varphi \left( t\right) =0$ for all $t\in \left[
0,a_{F}\right] ,$ as claimed. As a consequence, (\ref{i1}) reduces to
\begin{equation*}
\varphi \left( t\right) \leq 2M\int_{a_{F}}^{t}\gamma \left( s\right)
\varphi \left( s\right) ds,\ \ \ \text{for all\ }\ t\in \left[ a_{F},a\right]
,
\end{equation*}%
and the remaining conclusion $\varphi \left( t\right) =0$ for $t\in
(a_{F},a] $ follows from Gronwall's inequality. Then \textbf{(h4)} holds. By
Theorem \ref{thm 2.2} the thesis is reached.
\end{proof}

\begin{remark}
\emph{In particular, the condition (\ref{10}) holds if }$\Psi $\emph{\
satisfies the Lipschitz inequality}%
\begin{equation*}
\left\vert \Psi \left( u\right) \left( t\right) -\Psi \left( v\right) \left(
t\right) \right\vert \leq \gamma \left( t\right) \left\vert u\left( t\right)
-v\left( t\right) \right\vert
\end{equation*}%
\emph{for all }$u,v\in \overline{U}$\emph{\ and a.a. }$t\in \left[ 0,a\right]
.$
\end{remark}

In the case of the superposition nonlinearity, namely if $\Phi $\ is given
by (\ref{so}), from Theorem \ref{thm2.3}, we can deduce the following result.

\begin{corollary}[case of superposition operator]
Assume that the condition \emph{(h2)} holds. Let $f:[0,a]\times \overline{B}%
(0,|R|_{\infty })\rightarrow X$ be a mapping such that

\begin{description}
\item[(h1$_{f}$)] $\ f\left( .,x\right) $\ is measurable on $[0,a]$ for each
$x\in \overline{B}(0,|R|_{\infty });\newline
$\newline
\ \qquad $f\left( t,.\right) $\ is continuous on the ball $\overline{B}%
(0,R(t))$ for a.a. $t\in \left[ 0,a\right] ;\newline
$\newline
\ \qquad $\left\vert f\left( t,x\right) \right\vert \leq \eta \left(
t\right) $ for all $x\in \overline{B}(0,R(t))$ and a.a. $t\in \left[ 0,a%
\right] ,$\ where $\eta \in L_{+}^{1}\left( 0,a\right) ;$
\end{description}

\begin{description}
\item[(h3$_{f}$)] \ there exist $\delta \in L_{+}^{1}\left( 0,a\right) $ and
a continuous nondecreasing function $\psi :\mathbf{R}_{+}\rightarrow \mathbf{%
R}_{+}$ with $\psi \left( s\right) >0$ for all $s>0,$ such that
\begin{equation*}
\left\vert f(t,x)\right\vert \leq \delta (t)\psi \left( \left\vert
x)\right\vert \right) ,\ \ \text{for a.a. \ }t\in \left[ 0,a\right] \ \
\text{and all}\ \ x\in \overline{B}(0,R(t)),
\end{equation*}%
and \emph{(\ref{7}), (\ref{8})} are satisfied;
\end{description}

\begin{description}
\item[(h4*$_{f}$)] $\ f=g+h,$ where $h(D)$ is relatively compact in $X$ for $%
D:=\{(t,x)\,:\,|x|\leq R(t),\,t\in \lbrack 0,a]\},$ and there exists $\gamma
\in L_{+}^{1}\left( 0,a\right) $ such that for each countable set $C\subset
\overline{B}(0,R(t)),$
\begin{equation*}
\alpha \left( g\left( t,C\right) \right) \leq \gamma \left( t\right) \alpha
\left( C\right) ,\ \ \text{for a.a. }\ t\in \lbrack 0,a]
\end{equation*}%
and \emph{(\ref{13})} holds.
\end{description}

Then the problem
\begin{equation*}
\left\{
\begin{array}{l}
u^{\prime }(t)=A(t)u(t)+f(t,u(t)),\ \ \ \text{for\ a.a.\ }t\in \left[ 0,a%
\right] \\
u(0)=F(u)%
\end{array}%
\right.
\end{equation*}%
has a mild solution in $\overline{U}.$
\end{corollary}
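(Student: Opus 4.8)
The plan is to read this corollary as the specialization of Theorem \ref{thm2.3} to the case in which the functional term is the superposition (Nemytskii) operator
\[
\Phi \left( u\right) \left( t\right) =f\left( t,u\left( t\right) \right) ,\qquad t\in \left[ 0,a\right] ,\ u\in \overline{U}.
\]
Once this identification is made, hypothesis (h2) is assumed outright, and the whole task reduces to checking that the conditions (h1$_f$), (h3$_f$), (h4*$_f$) imposed on $f$ translate exactly into the abstract conditions (h1), (h3), (h4*) required by Theorem \ref{thm2.3}; the conclusion is then immediate by that theorem.

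I would dispose of (h3) first, as it is the most direct: evaluating the growth bound of (h3$_f$) at $x=u\left( t\right) $ gives $\left\vert \Phi \left( u\right) \left( t\right) \right\vert =\left\vert f\left( t,u\left( t\right) \right) \right\vert \leq \delta \left( t\right) \psi \left( \left\vert u\left( t\right) \right\vert \right) $, which is precisely (\ref{11}), while (\ref{7}) and (\ref{8}) are carried over verbatim from (h3$_f$). The substantive step, and the one I expect to be the main obstacle, is (h1), namely that $\Phi $ maps $\overline{U}$ continuously into $L^{1}\left( 0,a;X\right) $. Here I would invoke the classical continuity theory for superposition operators built on Carath\'{e}odory data. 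For fixed $u\in \overline{U}$ the map $t\mapsto \left( t,u\left( t\right) \right) $ takes values in $D$ and is measurable (since $u$ is continuous), so the measurability of $f\left( .,x\right) $ together with the continuity of $f\left( t,.\right) $ assumed in (h1$_f$) makes $t\mapsto f\left( t,u\left( t\right) \right) $ measurable, and the majorant $\left\vert f\left( t,u\left( t\right) \right) \right\vert \leq \eta \left( t\right) $ with $\eta \in L_{+}^{1}\left( 0,a\right) $ places $\Phi \left( u\right) $ in $L^{1}\left( 0,a;X\right) $. For the continuity itself, if $u_{n}\rightarrow u$ in $\overline{U}$, then $u_{n}\left( t\right) \rightarrow u\left( t\right) $ in $X$ for every $t$, the continuity of $f\left( t,.\right) $ yields $f\left( t,u_{n}\left( t\right) \right) \rightarrow f\left( t,u\left( t\right) \right) $ for a.a.\ $t$, and the integrable bound $\left\vert f\left( t,u_{n}\left( t\right) \right) -f\left( t,u\left( t\right) \right) \right\vert \leq 2\eta \left( t\right) $ allows the Lebesgue dominated convergence theorem to give $\Phi \left( u_{n}\right) \rightarrow \Phi \left( u\right) $ in $L^{1}\left( 0,a;X\right) $.

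Finally, for (h4*) I would transport the splitting $f=g+h$ of (h4*$_f$) to $\Phi =\Psi +\Theta $ by setting $\Psi \left( u\right) \left( t\right) =g\left( t,u\left( t\right) \right) $ and $\Theta \left( u\right) \left( t\right) =h\left( t,u\left( t\right) \right) $. Since every $u\in \overline{U}$ satisfies $\left( t,u\left( t\right) \right) \in D$, one has $\Theta \left( \overline{U}\right) \left( t\right) \subset h\left( D\right) $, so taking $K=\overline{h\left( D\right) }$, which is compact by (h4*$_f$), supplies the compactness requirement on $\Theta $. For the regularity part, given a countable $C\subset \overline{U}$ the slice $C\left( t\right) $ is a countable subset of $\overline{B}\left( 0,R\left( t\right) \right) $ and $\Psi \left( C\right) \left( t\right) =g\left( t,C\left( t\right) \right) $, so the measure-of-noncompactness estimate of (h4*$_f$) becomes $\alpha \left( \Psi \left( C\right) \left( t\right) \right) \leq \gamma \left( t\right) \alpha \left( C\left( t\right) \right) $, which is (\ref{10}), while the smallness condition (\ref{13}) is inherited directly. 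With (h1), (h2), (h3) and (h4*) all in force, Theorem \ref{thm2.3} delivers a mild solution of the problem in $\overline{U}$.
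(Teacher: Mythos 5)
Your proposal is correct and follows exactly the route the paper intends: the corollary is stated as an immediate specialization of Theorem \ref{thm2.3} to the superposition operator $\Phi(u)(t)=f(t,u(t))$, and your verifications that (h1$_f$), (h3$_f$), (h4*$_f$) yield (h1), (h3), (h4*) --- measurability plus dominated convergence for the continuity of $\Phi$ into $L^{1}(0,a;X)$, the pointwise growth bound, and the transported splitting $\Psi(u)(t)=g(t,u(t))$, $\Theta(u)(t)=h(t,u(t))$ with $K=\overline{h(D)}$ --- are precisely the details the paper leaves implicit. No discrepancy with the paper's argument; your write-up simply makes it explicit.
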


\section{Existence and localization of solutions for evolution systems}

\setcounter{equation}{0}Consider $n$ Banach spaces $\left( X_{i},\left\vert
.\right\vert _{i}\right) ,$ the product space $X=X_{1}\times X_{2}\times
...\times X_{n}$ and the Cauchy problem for an $n$-dimensional system, with
nonlocal conditions%
\begin{equation}
\left\{
\begin{array}{l}
u_{i}^{\prime }\left( t\right) =A_{i}\left( t\right) u_{i}\left( t\right)
+\Phi _{i}\left( u_{1},u_{2},...,u_{n}\right) \left( t\right) ,\ \ \ \text{%
for a.a\ \ }t\in \left[ 0,a\right] \\
u_{i}\left( 0\right) =F_{i}\left( u_{1},u_{2},...,u_{n}\right) ,%
\end{array}%
\right.  \label{s1}
\end{equation}%
$i=1,2,...,n.$ Here, for each $i,$ $\left\{ A_{i}\left( t\right) \right\}
_{t\in \left[ 0,a\right] }$ is a family of linear operators in the Banach
space $X_{i}$ generating an evolution operator $T_{i},$ $\ \Phi _{i}$ is a
nonlinear mapping, and $F_{i}$ is linear.

\noindent

On the linear part of the $i$-equation we require the condition:

\begin{description}
\item[(A$_{i}$)] $\{A_{i}(t)\}_{t\in \lbrack 0,a]}$ is a family of linear
not necessarily bounded operators $(A_{i}(t):D(A_{i})\subset
X_{i}\rightarrow X_{i},$ $t\in \lbrack 0,a],$ $D(A_{i})$ is a dense subset
of $X_{i}$ not depending on $t)$ generating a continuous evolution operator $%
T_{i}:\Delta \rightarrow \mathcal{L}(X_{i},X_{i}).$
\end{description}

\medskip Consider the vector-valued mappings, represented as column
matrices, $\Phi $ and $F$ acting from $C\left( \left[ 0,a\right] ;X\right) $
into $X,$%
\begin{equation}
\Phi =\left[ \Phi _{1},\Phi _{2},...,\Phi _{n}\right] ^{\text{tr}},\ \ \ F=%
\left[ F_{1},F_{2},...,F_{n}\right] ^{\text{tr}}  \label{fif}
\end{equation}%
and the family $\left\{ A\left( t\right) \right\} _{t\in \left[ 0,a\right] }$
of linear operators in $X,$ where, for each $t\in \lbrack 0,a]$, the
opertator $A(t):D(A)=\prod_{i=1}^{n}D(A_{i})\rightarrow X$ is represented as
diagonal matrix of operators,%
\begin{equation*}
A\left( t\right) =\left[
\begin{array}{ccc}
A_{1}\left( t\right) & ... & 0 \\
... & A_{2}\left( t\right) & ... \\
0 & ... & A_{n}\left( t\right)%
\end{array}%
\right] .
\end{equation*}%
Clearly, $\ A(t)x=[A_{1}(t)x_{1},A_{2}(t)x_{2},...,A_{n}(t)x_{n}]^{\text{tr}%
},$ $x\in D(A).$ Then looking at the elements of the product space $X$ as
column matrices, the system (\ref{s1}) can be written as
\begin{equation*}
\left\{
\begin{array}{l}
u^{\prime }\left( t\right) =A\left( t\right) u\left( t\right) +\Phi \left(
u\right) \left( t\right) ,\ \ \ \text{for\ a.a.\ }t\in \left[ 0,a\right] \\
u\left( 0\right) =F\left( u\right) ,%
\end{array}%
\right.
\end{equation*}%
which is exactly problem (\ref{1}), this time, in a vectorial form, in the
product space $X=X_{1}\times X_{2}\times ...\times X_{n}.$ Thus, all
previous results are applicable and yield existence theorems for the system (%
\ref{s1}). However, like in \cite{bip}, we can take advantage from the
splitting of this vectorial equation into $n$ equations and obtain more
refined results under conditions allowing the operators $F_{i}$ and $\Phi
_{i}$ to behave independently as much as possible. This will be possible by
exploiting the vectorial nature of the system and by using matrix conditions
instead of scalar ones. For instance, instead of speaking globally about the
support of the operator $F,$ as shown by (\ref{supp}), we shall consider the
\textit{support of }$F$\textit{\ with respect to each variable} $u_{i},$ $%
i=1,2,...,n,$ as being the minimal closed subinterval $\left[ 0,a_{i}\right]
$ of $\left[ 0,a\right] $ with the property%
\begin{eqnarray*}
F\left( u_{1},...,u_{i-1},u_{i},u_{i+1},...,u_{n}\right) &=&F\left(
u_{1},...,u_{i-1},v_{i},u_{i+1},...,u_{n}\right) \text{ } \\
\text{whenever }u_{i} &=&v_{i}\text{ on }\left[ 0,a_{i}\right] .
\end{eqnarray*}

Also, we are interested not only on the existence of a mild solution $%
u=\left( u_{1},u_{2},...,u_{n}\right) $ of the problem (\ref{s1}), but also
on the localization of each component $u_{i}$ individually. Thus, the
solutions are sought in a bounded closed subset $\overline{U}$ of $C\left( %
\left[ 0,a\right] ;X\right) ,$ of the form $\overline{U}=\overline{U}%
_{1}\times \overline{U}_{2}\times ...\times \overline{U}_{n}$ with%
\begin{equation*}
\overline{U}_{i}:=\left\{ v\in C\left( \left[ 0,a\right] ;X_{i}\right)
:\left\vert v\left( t\right) \right\vert _{i}\leq R_{i}\left( t\right) \text{
for all }t\in \left[ 0,a\right] \right\} ,
\end{equation*}%
where $R_{i}\in C\left[ 0,a\right] $ are given functions with $R_{i}\left(
t\right) >0\ $for all $t\in \left[ 0,a\right] ,$ $i=1,2,...,n.$

Let us define the family $\left\{ T\left( t,s\right) \right\} _{(t,s)\in
\Delta }$ of linear operators from $X$ to $X,$ where, for each $(t,s)\in
\Delta ,$ $T(t,s)$ is represented as diagonal matrix
\begin{equation*}
T\left( t,s\right) =\left[
\begin{array}{ccc}
T_{1}\left( t,s\right) & ... & 0 \\
... & T_{2}\left( t,s\right) & ... \\
0 & ... & T_{n}\left( t,s\right)%
\end{array}%
\right] ,
\end{equation*}%
and so%
\begin{equation*}
T\left( t,s\right) x=\left[ T_{1}\left( t,s\right) x_{1},T_{2}\left(
t,s\right) x_{2},...,T_{n}\left( t,s\right) x_{n}\right] ^{\text{tr}}\ ,\ \
x\in X.
\end{equation*}

We shall assume the analogue conditions to (h1) and (h2):

\begin{description}
\item[(H1)] $\Phi _{i}:\overline{U}\rightarrow L^{1}(0,a;X_{i})$ is
continuous, $i=1,2,...,n;$

\item[(H2)] $F_{i}:C\left( \left[ 0,a\right] ;X\right) \rightarrow X_{i}$ is
a linear and continuous mapping, $i=1,2,...,n,$ and the operator from $X$ to
$X,$ $x\mapsto x-F\left( T\left( .,0\right) x\right) $ has an inverse $B.$
\end{description}

Note that, using the vectorial notations $\Phi $ and $F$ given in (\ref{fif}%
), the conditions (H1), (H2) appear identical to (h1), (h2), respectively.

Like $F,$ the linear operator $B$ from $X$ to $X$ can be naturaly looked as
a column matrix
\begin{equation*}
B=\left[ B_{1},B_{2},...,B_{n}\right] ^{\text{tr}},
\end{equation*}%
where $B_{i}\in \mathcal{L}\left( X,X_{i}\right) .$ Moreover, thanks to the
linearity of the operators $B_{i}$ and $F_{i},$ $B$ and $F$ can be
identified to a matrix
\begin{equation*}
B=\left[ B_{ij}\right] _{1\leq i,j\leq n},\ \ \ F=\left[ F_{ij}\right]
_{1\leq i,j\leq n},
\end{equation*}%
whose entries $B_{ij}\in \mathcal{L}\left( X_{j},X_{i}\right) ,$ $F_{ij}\in $
$\mathcal{L}\left( C\left( \left[ 0,a\right] ;X_{j}\right) ,X_{i}\right) $
are given by%
\begin{eqnarray*}
B_{ij}\left( x_{j}\right) &=&B_{i}\left( 0,0,...,x_{j},0,...,0\right) \\
F_{ij}\left( u_{j}\right) &=&F_{i}\left( 0,0,...,u_{j},0,...,0\right) ,
\end{eqnarray*}%
with $x_{j}\in X_{j},$ $u_{j}\in C\left( \left[ 0,a\right] ;X_{j}\right) $
on the $j$-th position. Then%
\begin{equation*}
B_{i}\left( x\right) =\sum_{j=1}^{n}B_{ij}\left( x_{j}\right) ,\ \ \ \text{%
for every\ \ }x\in X,
\end{equation*}%
\begin{equation*}
F_{i}\left( u\right) =\sum_{j=1}^{n}F_{ij}\left( u_{j}\right) ,\ \ \ \text{%
for every \ }u\in C\left( \left[ 0,a\right] ;X\right) .
\end{equation*}

Let $G$ denote the linear mapping $BF$ from $C\left( \left[ 0,a\right]
;X\right) $ to $X.$ According to the above explanations,
\begin{equation*}
G\left( u\right) =\left[ G_{1}\left( u\right) ,G_{2}\left( u\right)
,...,G_{n}\left( u\right) \right] ^{\text{tr}},\ \ \ G=\left[ G_{ij}\right]
_{1\leq i,j\leq n},
\end{equation*}%
where $G_{i}\in \mathcal{L}\left( C\left( \left[ 0,a\right] ;X\right)
,X_{i}\right) ,$ $G_{ij}\in \mathcal{L}\left( C\left( \left[ 0,a\right]
;X_{j}\right) ,X_{i}\right) $ and
\begin{equation*}
G_{ij}\left( u_{j}\right) =G_{i}\left( 0,0,..,u_{j},0,...,0\right)
\end{equation*}%
with $u_{j}$ on the $j$-th position. Thanks again to the linearity of the
operators, we have
\begin{eqnarray*}
G_{i}\left( u\right) &=&B_{i}\left( F\left( u\right) \right)
=\sum_{k=1}^{n}B_{ik}\left( F_{k}\left( u\right) \right)
=\sum_{k=1}^{n}B_{ik}\left( \sum_{j=1}^{n}F_{kj}\left( u_{j}\right) \right)
\\
&=&\sum_{k,j=1}^{n}B_{ik}\left( F_{kj}\left( u_{j}\right) \right)
\end{eqnarray*}%
and%
\begin{equation*}
G_{ij}\left( u_{j}\right) =\sum_{k=1}^{n}B_{ik}F_{kj}\left( u_{j}\right) .
\end{equation*}

Using the above notations, letting $M_{i}$ be such that $|T_{i}(t,s)|_{%
\mathcal{L}(X_{i},X_{i})}\leq M_{i}\,$ for all $(t,s)\in \Delta ,$ and
denoting for simplicity%
\begin{equation*}
\left\vert G_{ij}\right\vert =\left\vert G_{ij}\right\vert _{\mathcal{L}%
(C([0,a];X_{j}),X_{i})},
\end{equation*}%
we can state our next assumption:

\begin{description}
\item[(H3)] for each $i=1,2,...,n,$ there exist $\delta _{i}\in
L_{+}^{1}\left( 0,a\right) $ and a continuous nondecreasing function $\psi
_{i}:\mathbf{R}_{+}\rightarrow \mathbf{R}_{+}$ with $\psi _{i}\left(
s\right) >0$ for all $s>0,$ such that%
\begin{equation}
\left\vert \Phi _{i}\left( u\right) \left( t\right) \right\vert \leq \delta
_{i}\left( t\right) \psi _{i}\left( \left\vert u_{i}\left( t\right)
\right\vert _{i}\right) \ \ \ \text{for a.a. }t\in \left[ 0,a\right] \text{
and all }u\in \overline{U},  \label{ff1}
\end{equation}%
\begin{equation}
r_{i}:=M_{i}\sum_{j=1}^{n}\left\vert G_{ij}\right\vert M_{j}\left\vert
\delta _{j}\left( .\right) \psi _{j}\left( R_{j}\left( .\right) \right)
\right\vert _{L^{1}\left( 0,a_{j}\right) }<\min_{t\in \left[ 0,a\right]
}R_{i}\left( t\right) ,  \label{rr1}
\end{equation}%
where $\left[ 0,a_{j}\right] $ is the support of $F$ with respect the
variable $u_{j},$ and%
\begin{equation}
\int_{r_{i}}^{R_{i}\left( t\right) }\frac{d\tau }{\psi _{i}\left( \tau
\right) }\geq M_{i}\left\vert \delta _{i}\right\vert _{L^{1}\left(
0,t\right) }\ \ \ \text{for all }t\in \left[ 0,a\right] .
\end{equation}
\end{description}

Note that the support of $F$ in this case is given by $a_{F}=\max_{1\leq
i\leq n}a_{i}.$

Finally, if we denote by $\alpha _{i}$ the Kuratowski measure of
noncompactness on $X_{i},$ then we can state the vectorial analogue of the
condition (h4*):

\begin{description}
\item[(H4)] for each $i=1,2,...,n,$ $\Phi _{i}=\Psi _{i}+\Theta _{i},$ where
$\Theta _{i}\left( \overline{U}\right) \left( t\right) \subset K_{i}$ for
a.a $t\in \left[ 0,a\right] ,$ $K_{i}$ being a compact set in $X_{i},$ and
there exist $\gamma _{ij}\in L_{+}^{1}\left( 0,a\right) $ $\left( 1\leq
j\leq n\right) ,$ such that for each countable set $C\subset \overline{U},$%
\begin{equation*}
\alpha _{i}\left( \Psi _{i}\left( C\right) \left( t\right) \right) \leq
\sum_{j=1}^{n}\gamma _{ij}\left( t\right) \alpha _{j}\left( C_{j}\left(
t\right) \right) ,\ \ \ \text{for a.a. ~}t\in \left[ 0,a\right] ,
\end{equation*}%
and
\begin{equation}
\rho \left( H\right) <1  \label{mm}
\end{equation}%
for the matrix
\begin{equation*}
H=2\left( \left\vert \mathcal{G}\right\vert \left\vert \widetilde{\gamma }%
\right\vert _{L^{1}\left( 0,a_{F}\right) }+\left\vert \gamma \right\vert
_{L^{1}\left( 0,a_{F}\right) }\right) .
\end{equation*}%
Here $\rho \left( H\right) $ is the spectral radius of $H$ and $\left\vert
\mathcal{G}\right\vert ,$ $\left\vert \gamma \right\vert _{L^{1}\left(
0,a_{F}\right) },$ $\left\vert \widetilde{\gamma }\right\vert _{L^{1}\left(
0,a_{F}\right) }$ are the matrices
\begin{eqnarray*}
\left\vert \mathcal{G}\right\vert &=&\left[ M_{i}\left\vert
G_{ij}\right\vert \right] _{1\leq i,j\leq n},\ \  \\
\left\vert \gamma \right\vert _{L^{1}\left( 0,a_{F}\right) } &=&\left[
M_{i}\left\vert \gamma _{ij}\right\vert _{L^{1}\left( 0,a_{F}\right) }\right]
_{1\leq i,j\leq n},\ \ \ \left\vert \widetilde{\gamma }\right\vert
_{L^{1}\left( 0,a_{F}\right) }=\left[ M_{i}\left\vert \widetilde{\gamma }%
_{ij}\right\vert _{L^{1}\left( 0,a_{F}\right) }\right] _{1\leq i,j\leq n}\ ,
\end{eqnarray*}%
where $\widetilde{\gamma }_{ij}\left( t\right) =\gamma _{ij}\left( t\right) $
for $t\in \left[ 0,a_{i}\right] ,\ \widetilde{\gamma }_{ij}\left( t\right)
=0 $ for $t\in (a_{i},a].$
\end{description}

\begin{theorem}
\label{thm3.1}Under the conditions \emph{\textbf{(H1)-(H4)},} the problem
\emph{(\ref{s1})} has a mild solution in $\overline{U}.$
\end{theorem}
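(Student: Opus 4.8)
The plan is to apply M\"{o}nch's continuation principle (Theorem \ref{th 1.1}) directly in the product space $X=X_{1}\times\cdots\times X_{n}$, exactly as in the scalar case. Indeed, as already observed, with the vectorial notations (\ref{fif}) the system (\ref{s1}) is nothing but problem (\ref{1}) posed in $X$, the conditions \textbf{(H1)}, \textbf{(H2)} coincide with \textbf{(h1)}, \textbf{(h2)}, and the associated integral operator $N$ of (\ref{N}) (with $BF=G$) maps $\overline{U}$ into $C([0,a];X)$, its fixed points being precisely the mild solutions of (\ref{s1}) in $\overline{U}$. Thus it suffices, taking $u_{0}=0$ and the open tube $U$, to verify the two hypotheses \textbf{(h3}$^{0}$\textbf{)} and \textbf{(h4}$^{0}$\textbf{)} of Theorem \ref{th 2.1}; this I would do componentwise, exploiting the product structure and replacing the scalar smallness conditions by the matrix condition $\rho(H)<1$.

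For \textbf{(h3}$^{0}$\textbf{)} I would repeat, for each fixed index $i$, the argument of Lemma \ref{lemma 2.2}. Suppose $u=\lambda N(u)$ with $\lambda\in(0,1)$ and look at the $i$-th component. By (\ref{ff1}) the bound on $\Phi_{i}(u)$ involves only $u_{i}$, so the Volterra part of the $i$-th component of $N(u)(t)$ is estimated exactly as in Lemma \ref{lemma 2.2}, producing the term $M_{i}|\delta_{i}(\cdot)\psi_{i}(|u_{i}(\cdot)|_{i})|_{L^{1}(0,t)}$; the nonlocal part $T_{i}(t,0)\sum_{j}G_{ij}(\cdots)$ is bounded, using the support $a_{j}$ of $F$ in the variable $u_{j}$ and the a priori bounds $|u_{j}(t)|_{j}\le R_{j}(t)$, by the constant $r_{i}$ of (\ref{rr1}). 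Hence $|u_{i}(t)|_{i}$ satisfies the same scalar integral inequality as in Lemma \ref{lemma 2.2}, now with data $(M_{i},\delta_{i},\psi_{i},R_{i},r_{i})$, and the Kamke-type comparison argument there yields $|u_{i}(t)|_{i}<R_{i}(t)$ on $[0,a]$. Since this holds for every $i$, we obtain $u\in U$, which is \textbf{(h3}$^{0}$\textbf{)}. The key point is that the nonlinearities $\Phi_{i}$ decouple in this a priori estimate, so the scalar lemma applies verbatim in each coordinate.

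For \textbf{(h4}$^{0}$\textbf{)} I would follow the scheme of Lemma \ref{II lemma} and Theorem \ref{thm2.3}. Let $C\subset\overline{U}$ be countable with $C\subset\overline{\mathrm{conv}}(\{0\}\cup N(C))$. First one proves, component by component as in Lemma \ref{II lemma}, that $C$ is equicontinuous, so that (\ref{a1}) applies and the functions $\varphi_{i}(t):=\alpha_{i}(C_{i}(t))$ are continuous. Since in \textbf{(H4)} the part $\Theta_{i}$ has relatively compact range, $\alpha_{i}(\Phi_{i}(C)(t))\le\sum_{j}\gamma_{ij}(t)\varphi_{j}(t)$. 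Estimating the $i$-th component of $N_{2}(C)$ by (\ref{h1}) and the $i$-th component of $N_{1}(C)$ by linearity of the $G_{ij}$ together with (\ref{a1}) and the support identity $G_{ij}(v)=G_{ij}(\chi_{a_{j}}v)$, I would obtain the vectorial integral inequality
\begin{equation*}
\varphi_{i}(t)\le 2M_{i}\sum_{j}|G_{ij}|\,M_{j}\!\int_{0}^{a_{j}}\!\sum_{k}\gamma_{jk}(s)\varphi_{k}(s)\,ds+2M_{i}\!\int_{0}^{t}\!\sum_{j}\gamma_{ij}(s)\varphi_{j}(s)\,ds.
\end{equation*}
Setting $\varphi_{i}^{\ast}=\max_{[0,a_{F}]}\varphi_{i}$ and bounding $\varphi_{k}(s)\le\varphi_{k}^{\ast}$ on $[0,a_{F}]\supseteq[0,a_{j}]$, the first integral produces exactly $(|\mathcal{G}|\,|\widetilde{\gamma}|_{L^{1}(0,a_{F})})_{ik}$ (recall $\int_{0}^{a_{j}}\gamma_{jk}=|\widetilde{\gamma}_{jk}|_{L^{1}(0,a_{F})}$) and the second $(|\gamma|_{L^{1}(0,a_{F})})_{ij}$, so that the vector $\varphi^{\ast}=(\varphi_{1}^{\ast},\dots,\varphi_{n}^{\ast})^{\mathrm{tr}}$ satisfies $\varphi^{\ast}\le H\varphi^{\ast}$ with $H$ the matrix of \textbf{(H4)}.

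Finally, the matrix condition does the work of the scalar estimate (\ref{13}). From $\rho(H)<1$ and the equivalence (i)$\Leftrightarrow$(iii) recalled in the Introduction, $I-H$ is invertible with nonnegative inverse; applying $(I-H)^{-1}$ to $(I-H)\varphi^{\ast}\le0$ and using $\varphi^{\ast}\ge0$ forces $\varphi^{\ast}=0$, i.e. $\varphi_{i}\equiv0$ on $[0,a_{F}]$ for every $i$. On $(a_{F},a]$ the nonlocal term then vanishes (its integrals run over $[0,a_{j}]\subseteq[0,a_{F}]$), leaving $\varphi_{i}(t)\le2M_{i}\int_{a_{F}}^{t}\sum_{j}\gamma_{ij}(s)\varphi_{j}(s)\,ds$, whence $\varphi\equiv0$ on $[0,a]$ by a vectorial Gronwall argument. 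Thus each $C_{i}(t)$, and so $C(t)$, is relatively compact, which is \textbf{(h4}$^{0}$\textbf{)}, and Theorem \ref{th 2.1} provides the desired fixed point. The main obstacle I anticipate is essentially bookkeeping: arranging the measure-of-noncompactness estimates so that the constants $M_{i},M_{j}$, the norms $|G_{ij}|$ and the truncated kernels $\widetilde{\gamma}_{ij}$ assemble into precisely $H=2(|\mathcal{G}|\,|\widetilde{\gamma}|_{L^{1}(0,a_{F})}+|\gamma|_{L^{1}(0,a_{F})})$, after which the spectral-radius machinery closes the argument cleanly.
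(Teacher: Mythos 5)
Your proposal is correct and follows essentially the same route as the paper's own proof: Mönch's theorem applied in the product space with $u_0=0$ and the tube $U$, condition (h3$^{0}$) verified componentwise by reducing to the scalar comparison argument of Lemma \ref{lemma 2.2} with data $(M_i,\delta_i,\psi_i,R_i,r_i)$, and condition (h4$^{0}$) verified via the vectorial inequality for $\varphi_i(t)=\alpha_i(C_i(t))$, the bound $\varphi^{\ast}\le H\varphi^{\ast}$ on $[0,a_F]$, the nonnegativity of $(I-H)^{-1}$ from $\rho(H)<1$, and Gronwall on $[a_F,a]$. Your ordering (establishing equicontinuity before using $\varphi_i$ and formula (\ref{a1})) is in fact slightly cleaner than the paper's, which defers that point to the end, but the substance is identical.
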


\begin{proof}
The problem (\ref{s1}) is equivalent to the fixed point equation for the
nonlinear operator (\ref{N}) in $C\left( \left[ 0,a\right] ;X\right) ,$ $%
N=N_{1}+N_{2},$ where for each $i=1,2,...,n,$%
\begin{equation}
N_{2i}\left( u\right) \left( t\right) =\int_{0}^{t}T_{i}(t,s)\Phi _{i}\left(
u\right) \left( s\right) ds  \label{n2}
\end{equation}%
and%
\begin{eqnarray}
N_{1i}\left( u\right) \left( t\right)  &=&T_{i}\left( t,0\right) G_{i}\left(
N_{2}(u)\right)   \label{nn1} \\
&=&T_{i}\left( t,0\right) \sum_{j=1}^{n}G_{ij}\left( N_{2j}(u)\right)
\notag \\
&=&T_{i}\left( t,0\right) \sum_{j=1}^{n}G_{ij}\left( \chi _{a_{j}}\left(
N_{2j}\left( u\right) \right) \right) .  \notag
\end{eqnarray}%
Here $\chi _{a_{j}}:C\left( \left[ 0,a\right] ;X_{j}\right) \rightarrow
C\left( \left[ 0,a\right] ;X_{j}\right) $ is given by
\begin{equation*}
\chi _{a_{j}}\left( v\right) \left( t\right) =\left\{
\begin{array}{ll}
v\left( t\right)  & \text{if }t\in \left[ 0,a_{j}\right]  \\
v\left( a_{j}\right)  & \text{if }t\in (a_{j},a],%
\end{array}%
\right.
\end{equation*}%
for all $v\in C\left( \left[ 0,a\right] ;X_{j}\right) .$

We shall apply M\"{o}nch's continuation theorem in the Banach space $C\left( %
\left[ 0,a\right] ;X\right) ,$ to the open bounded set $\ U=U_{1}\times
U_{2}\times ...$ $\times U_{n},\ $where
\begin{equation*}
U_{i}=\{v\in C\left( \left[ 0,a\right] ;X_{i}\right) :\left\vert u_{i}\left(
t\right) \right\vert _{i}<R_{i}\left( t\right) \ \text{for }t\in \left[ 0,a%
\right] \}\ \ \ \left( 1\leq i\leq n\right)
\end{equation*}%
and to the element $u_{0}=0.$ Let $u=\lambda N\left( u\right) $ for some $%
u\in \overline{U}$ $\ $and$\ \lambda \in \left( 0,1\right) .$ From (\ref{ff1}%
), (\ref{n2}), we have%
\begin{equation}
\left\vert N_{2i}\left( u\right) \left( t\right) \right\vert _{i}\leq
M_{i}\int_{0}^{t}\left\vert \Phi _{i}\left( u\right) (s)\right\vert ds\leq
M_{i}\left\vert \delta _{i}\left( .\right) \psi _{i}\left( \left\vert
u_{i}\left( .\right) \right\vert _{i}\right) \right\vert _{L^{1}\left(
0,t\right) }  \label{n3}
\end{equation}%
Also, from (\ref{nn1}) and (\ref{n3}),
\begin{eqnarray}
\left\vert N_{1i}\left( u\right) \left( t\right) \right\vert _{i} &\leq
&M_{i}\sum_{j=1}^{n}\left\vert G_{ij}\right\vert \left\vert \chi
_{a_{j}}\left( N_{2j}\left( u\right) \right) \right\vert _{C\left( \left[ 0,a%
\right] ;X_{j}\right) }  \label{n1} \\
&\leq &M_{i}\sum_{j=1}^{n}\left\vert G_{ij}\right\vert M_{j}\left\vert
\delta _{j}\left( .\right) \psi _{j}\left( \left\vert u_{j}\left( .\right)
\right\vert _{j}\right) \right\vert _{L^{1}\left( 0,a_{j}\right) }  \notag \\
&\leq &r_{i}.  \notag
\end{eqnarray}%
Then, since $u=\lambda N\left( u\right) ,$ for each $t\in \left[ 0,a\right]
, $ one has%
\begin{equation*}
\left\vert u_{i}\left( t\right) \right\vert _{i}\leq \lambda \left(
r_{i}+M_{i}\left\vert \delta _{i}\left( .\right) \psi _{i}\left( \left\vert
u_{i}\left( .\right) \right\vert _{i}\right) \right\vert _{L^{1}\left(
0,t\right) }\right) =:c_{i}\left( t\right) .
\end{equation*}%
Next we follow the same argument as in the proof of Lemma \ref{lemma 2.2} in
order to show that
\begin{equation*}
c_{i}(t)<R_{i}(t)\ \ \ \text{for every\ \ }t\in \lbrack 0,a].
\end{equation*}%
To check condition (b) of Theorem \ref{th 1.1}, let $C\subset \overline{U}$
be countable and $C\subset \overline{\text{conv}}\left( \left\{ 0\right\}
\cup N\left( C\right) \right) .$ Then, for each $i,$
\begin{eqnarray}
\varphi _{i}\left( t\right) := &&\alpha _{i}\left( C_{i}\left( t\right)
\right) =\alpha _{i}\left( N_{i}\left( C\right) \left( t\right) \right)
\label{x1} \\
&\leq &\alpha _{i}\left( N_{1i}\left( C\right) \left( t\right) \right)
+\alpha _{i}\left( N_{2i}\left( C\right) \left( t\right) \right) ,\ \ \text{
}t\in \lbrack 0,a].  \notag
\end{eqnarray}%
Using (\ref{h1}), \textbf{(H4)} and (\ref{x1}) we obtain for a.e. $t\in
\lbrack 0,a]$%
\begin{eqnarray}
\alpha _{i}\left( N_{2i}\left( C\right) \left( t\right) \right) &\leq
&2\int_{0}^{t}M_{i}\alpha _{i}\left( \Phi _{i}\left( C\right) \left(
s\right) \right) ds  \label{x2} \\
&\leq &2M_{i}\int_{0}^{t}\sum_{j=1}^{n}\gamma _{ij}\left( s\right) \alpha
_{j}\left( C_{j}\left( s\right) \right) ds  \notag \\
&=&2M_{i}\int_{0}^{t}\sum_{j=1}^{n}\gamma _{ij}\left( s\right) \varphi
_{j}\left( s\right) ds.  \notag
\end{eqnarray}%
This, in view of (\ref{nn1}), yields%
\begin{eqnarray*}
\alpha _{i}\left( N_{1i}\left( C\right) \left( t\right) \right) &\leq
&M_{i}\alpha _{i}\left( \sum_{j=1}^{n}G_{ij}\left( \chi _{a_{j}}\left(
N_{2j}\left( C\right) \right) \right) \right) \\
&\leq &M_{i}\sum_{j=1}^{n}\left\vert G_{ij}\right\vert \alpha _{C_{j}}\left(
\chi _{a_{j}}\left( N_{2j}\left( C\right) \right) \right) ,
\end{eqnarray*}%
where $\alpha _{C_{j}}$ is the Kuratowski measure of noncompactness on $%
C\left( \left[ 0,a\right] ;X_{j}\right) .$ Furthermore, by (\ref{x2}) and
\textbf{(H4)}, we get
\begin{eqnarray*}
\alpha _{C_{j}}\left( \chi _{a_{j}}\left( N_{2j}\left( C\right) \right)
\right) &=&\max_{t\in \left[ 0,a\right] }\alpha _{j}\left( \chi
_{a_{j}}\left( N_{2j}\left( C\right) \right) \left( t\right) \right)
=\max_{t\in \left[ 0,a_{j}\right] }\alpha _{j}\left( \left( N_{2j}\left(
C\right) \right) \left( t\right) \right) \\
&\leq &2M_{j}\int_{0}^{a_{j}}\sum_{k=1}^{n}\gamma _{jk}\left( s\right)
\varphi _{k}\left( s\right) ds=2M_{j}\int_{0}^{a_{F}}\sum_{k=1}^{n}%
\widetilde{\gamma }_{jk}\left( s\right) \varphi _{k}\left( s\right) ds.
\end{eqnarray*}%
Then%
\begin{equation}
\alpha _{i}\left( N_{1i}\left( C\right) \left( t\right) \right) \leq
M_{i}\sum_{j=1}^{n}\left\vert G_{ij}\right\vert
2M_{j}\int_{0}^{a_{F}}\sum_{k=1}^{n}\widetilde{\gamma }_{jk}\left( s\right)
\varphi _{k}\left( s\right) ds.  \label{x3}
\end{equation}%
Now from (\ref{x1})-(\ref{x3}) we find%
\begin{equation*}
\varphi _{i}\left( t\right) \leq 2\sum_{j=1}^{n}M_{i}\left\vert
G_{ij}\right\vert \int_{0}^{a_{F}}\sum_{k=1}^{n}M_{j}\widetilde{\gamma }%
_{jk}\left( s\right) \varphi _{k}\left( s\right)
ds+2\int_{0}^{t}\sum_{k=1}^{n}M_{i}\gamma _{ik}\left( s\right) \varphi
_{k}\left( s\right) ds.
\end{equation*}%
If we denote%
\begin{eqnarray}
\gamma \left( t\right) &=&\left[ M_{i}\gamma _{ij}\left( t\right) \right]
_{1\leq i,j\leq n},\ \ \widetilde{\gamma }\left( t\right) =\left[ M_{i}%
\widetilde{\gamma }_{ij}\left( t\right) \right] _{1\leq i,j\leq n},\ \
\notag  \label{gammafi} \\
&& \\
\varphi \left( t\right) &=&\left[ \varphi _{1}\left( t\right) ,\varphi
_{2}\left( t\right) ,...,\varphi _{n}\left( t\right) \right] ^{\text{tr}},
\notag
\end{eqnarray}%
then the above inequalities for $i=1,2,...,n,$ can be put under the
vectorial form as%
\begin{equation}
\varphi \left( t\right) \leq 2\left\vert \mathcal{G}\right\vert
\int_{0}^{a_{F}}\widetilde{\gamma }\left( s\right) \varphi \left( s\right)
ds+2\int_{0}^{t}\gamma \left( s\right) \varphi \left( s\right) ds,\ \ \ \
t\in \left[ 0,a\right] .  \label{fff}
\end{equation}%
Finally we follow the same argument as in the proof of Theorem \ref{thm2.3},
in order to show that $\varphi \equiv 0$ on $\left[ 0,a\right] .$ The only
one difference is that for $t\in \left[ 0,a_{F}\right] ,$ from (\ref{fff}),
we have
\begin{equation*}
\varphi \left( t\right) \leq 2\left( \left\vert \mathcal{G}\right\vert
\left\vert \widetilde{\gamma }\right\vert _{L^{1}\left( 0,a_{F}\right)
}+\left\vert \gamma \right\vert _{L^{1}\left( 0,a_{F}\right) }\right)
\left\vert \varphi \right\vert _{L^{\infty }\left( 0,a_{F}\right)
}=H\left\vert \varphi \right\vert _{L^{\infty }\left( 0,a_{F}\right) },
\end{equation*}%
whence%
\begin{equation}
\left\vert \varphi \right\vert _{L^{\infty }\left( 0,a_{F}\right) }\leq
H\left\vert \varphi \right\vert _{L^{\infty }\left( 0,a_{F}\right) },
\label{fff1}
\end{equation}%
where by $\left\vert \varphi \right\vert _{L^{\infty }\left( 0,a_{F}\right)
} $ we mean the column matrix of entries $\left\vert \varphi _{i}\right\vert
_{L^{\infty }\left( 0,a_{F}\right) }.$ Then (\ref{fff1}) is equivalent to
the matrix inequality%
\begin{equation}
\left( I-H\right) \left\vert \varphi \right\vert _{L^{\infty }\left(
0,a_{F}\right) }\leq 0.  \label{fff2}
\end{equation}%
By (\ref{mm}), the entries of the matrix $\left( I-H\right) ^{-1}$ are
nonnegative, so in (\ref{fff2}) we can multiply to the left by $\left(
I-H\right) ^{-1}$ without changing the inequality, to obtain $\left\vert
\varphi \right\vert _{L^{\infty }\left( 0,a_{F}\right) }$ $\leq 0.$ Hence $%
\varphi \left( t\right) =0$ for all $t\in \left[ 0,a_{F}\right] .$ \noindent
The Gronwall's inequality implies that $\varphi (t)=0$ for all $t\in \lbrack
a_{F},a].$ Taking into account of (\ref{fff1}) and (\ref{gammafi}) we can
say that for each $i=1,2,...,n$ and for all $t\in \lbrack 0,a],$
\begin{equation*}
\varphi _{i}(t)=\alpha _{i}(C_{i}(t))=0,
\end{equation*}%
so $C_{i}(t)$ is relatively compact in $X_{i}$ and $C(t)=%
\prod_{i=1}^{n}C_{i}(t)$ is relatively compact in $X.$ \noindent Following
the same argument of the proof of Lemma \ref{II lemma} we have that $C$ is
equicontinuous, so we can say that the condition \textbf{(h4$^{0}$)} is
satisfied. \noindent On the other hand, by using \textbf{(H1)-(H3)}, as in
the proof of Lemma \ref{lemma 2.2} we can deduce \textbf{(h3$^{0}$)}.
\noindent Therefore Theorem \ref{th 2.1} provides the existence of at least
one mild solution $u=(u_{1},u_{2},...,u_{n})$ where $u_{i}\in \overline{U_{i}%
},$ $i=1,2,...,n.$
\end{proof}

\begin{remark}
\emph{In general, we have the matrix inequality }$\left\vert \widetilde{%
\gamma }\right\vert _{L^{1}\left( 0,a_F\right) }\leq \left\vert \gamma
\right\vert _{L^{1}\left( 0,a_F\right) }.$\emph{\ In particular, if }$%
a_{1}=a_{2}=...=a_{n}\ \left( =a_F\right) ,$\emph{\ i.e. }$\left[ 0,a_F%
\right] $\emph{\ is the support of }$F$\emph{\ with respect to all
variables, one has }$\gamma \left( t\right) =\widetilde{\gamma }\left(
t\right) $\emph{\ for all }$t\in \left[ 0,a_F\right] ,$\emph{\ which gives }$%
\left\vert \widetilde{\gamma }\right\vert _{L^{1}\left( 0,a_F\right)
}=\left\vert \gamma \right\vert _{L^{1}\left( 0,a_F\right) }$\emph{\ and }$%
H=2\left( \left\vert \mathcal{G}\right\vert +I\right) \left\vert \gamma
\right\vert _{L^{1}\left( 0,a_F\right) }.$
\end{remark}

To conclude, let us underline the combined contribution of the functions $%
\delta _{i},$ $\psi _{i},$ $R_{i},$ $\gamma _{ij}$ and numbers $M_{i}$ and $%
a_{i}$ to the conditions of Theorem \ref{thm3.1}. In particular, note the
different contribution of the support intervals $\left[ 0,a_{i}\right] ,$ $%
i=1,2,...,n,$ in realizing the assumptions (\ref{rr1}) and (\ref{mm}). As
smaller $a_{i}$ are, more chance for (\ref{rr1}), (\ref{mm}) exists. In the
limit case, where $a_{i}=0$ for all $i,$ that is for the classical Cauchy
problem, the conditions (\ref{rr1}) and (\ref{mm}) are trivially satisfied.

\bigskip

\noindent \textbf{Acknowledgements}\smallskip

\noindent The authors have been supported by the Gruppo Nazionale per
l'Analisi Matematica, la Probabilit\`{a} e le loro Applicazioni (GNAMPA) of
the Istituto Nazionale di Alta Matematica (INdAM) and by the National
Research Project GNAMPA 2013 \emph{Topological Methods for Nonlinear
Differential Problems and Applications}.

\end{document}